\def\ve#1{\mathchoice{\mbox{\boldmath$\displaystyle\bf#1$}}
	{\mbox{\boldmath$\textstyle\bf#1$}}
	{\mbox{\boldmath$\scriptstyle\bf#1$}}
	{\mbox{\boldmath$\scriptscriptstyle\bf#1$}}}
\newcommand{\N}{\ensuremath{\mathbb{N}}}
\newcommand{\Z}{\ensuremath{\mathbb{Z}}}
\newcommand{\R}{\ensuremath{\mathbb{R}}}
\def\Orthant_j{{\mathcal O}_{j}}
\newcommand\veb{{\ve b}}
\newcommand\vel{{\ve l}}
\newcommand\veu{{\ve u}}
\newcommand\vew{{\ve w}}
\newcommand\vex{{\ve x}}
\newcommand\vey{{\ve y}}
\begin{document}
\title{Block-structured Integer Programming: Can we Parameterize without the Largest Coefficient?\thanks{Research was supported in part by NSF 1756014 and NSFC 11531014.}}
\titlerunning{Block-structured IP: Parameterizing without the Largest Coefficient}
%
\author{Lin Chen\inst{1} \and
Hua Chen\inst{2} \and
Guochuan Zhang\inst{2}}
\authorrunning{L. Chen et al.}
%
\institute{Texas Tech University, Lubbock, TX, US\\
	\email{chenlin198662@gmail.com}
	\and
	Zhejiang University, Hangzhou, China\\
	\email{chenhua\_by@zju.edu.cn; zgc@zju.edu.cn}
	}
\maketitle              
\begin{abstract}
We consider 4-block $n$-fold integer programming, which can be written as $\max\{\vew\cdot \vex: H \vex=\veb, \vel\le \vex\le \veu, \vex\in \Z^{N} \}$ where the constraint matrix $H$ is composed of small submatrices $A,B,C,D$ such that the first row of $H$ is $(C,D,D,\cdots,D)$, the first column of $H$ is $(C,B,B,\cdots,B)$, the main diagonal of $H$ is $(C,A,A,\cdots,A)$, and all the other entries are $0$. The special case where $B=C=0$ is known as $n$-fold integer programming.

Prior algorithmic results for 4-block $n$-fold integer programming and its special cases usually take $\Delta$, the largest absolute value among entries of $H$ as part of the parameters. In this paper, we explore the possibility of getting rid of $\Delta$ from parameters, i.e., we are looking for algorithms that runs polynomially in $\log\Delta$. We show that, assuming $\text{P}\neq \text{NP}$, this is not possible even if $A=(1,1,\Delta)$ and $B=C=0$. However, this becomes possible if $A=(1,1,\cdots,1)$ or $A\in \Z^{1\times 2}$, or more generally if $A\in\Z^{s_A\times t_A} $ where $t_A=s_A+1$ and the rank of matrix $A$ satisfies that $\text{rank}(A)=s_A$. More precisely,   

\begin{itemize}
	\item	If $A=(1,\ldots,1)\in \Z^{1\times t_A} $, then 4-block $n$-fold IP can be solved in $(t_A+t_B)^{O(t_A+t_B)}\cdot poly(n,\log\Delta)$ time;
	\item If $A\in\Z^{s_A\times t_A} $, $t_A=s_A+1$ and $\text{rank}(A)=s_A$, then 4-block $n$-fold IP can be solved in $(t_A+t_B)^{O(t_A+t_B)}\cdot n^{O(t_A)}\cdot poly(\log\Delta)$ time; Specifically, if in addition we have $B=C=0$ (i.e., $n$-fold integer programming), then it can be solved in linear time $n\cdot poly(t_A,\log \Delta)$.
\end{itemize}

\keywords{Integer programming \and 4-block $n$-fold IP \and $n$-fold IP \and Fixed parameter tractable.}
\end{abstract}
\clearpage

\section{Introduction}
\label{sec:typesetting-summary}

\emph{Integer Programming} is widely used as a modelling tool for a variety of combinatorial optimization problems.  A standard form of an integer program (IP) is defined as follows:
\begin{eqnarray}\label{ILP}
\max\{\vew\cdot \vex: H \vex=\veb, \vel\le \vex\le \veu, \vex\in \Z^{N} \}
\end{eqnarray}
where the coordinates of $H,\vew,\veb,\vel,\veu$ are integers. Here $H$ is the \emph{constraint matrix} with dimension $M\times N$. We let $\Delta$ be the largest absolute value among all the entries of $H$. 

In general, IP is NP-hard, which was shown by Karp~\cite{karp1972reducibility}, thus motivating the search for tractable special cases. There are two important lines of research in the literature which target at different parameters and motivate our research in this paper. 
The first line of research dated back to the work of Papadimitriou in 1981~\cite{papadimitriou1981complexity}, where he considered IPs with few constraints, and   
provided an algorithm whose time is $(M\cdot \Delta)^{O(M^2)}$. 
This result was later improved by Eisenbrand and Weismantel~\cite{eisenbrand2019proximity}, and then by Jansen et al.~\cite{jansen2018integer}. So far the best known result is $(\sqrt{M}\Delta)^{O(M)}\cdot\log(\|\veb\|_{\infty})$, where $\|\veb\|_{\infty}$ represents the maximal absolute value of coordinates in vector $\veb$. 
The second line of research dated back to the work of Lenstra~\cite{lenstra1983integer} in 1983, where he considered IPs with few variables. This result was later on improved by Kannan~\cite{kannan1987minkowski} who presented an algorithm of running time $N^{O(N)}\cdot poly(M,\log\Delta)$. In recent years, there is further improvement on the coefficient of the exponent in the term $N^{O(N)}$ (see, e.g.~\cite{dadush2011enumerative}).


The above algorithms require $H$ to have either few rows or few columns, but in many applications it may be inevitable to have a constraint matrix with a huge number of rows and columns. In recent years, there is an increasing interest in the study of IP where the constraint matrix $H$ may have many rows and columns, but has a more restricted block structure. Such block-structured IP finds application in a variety of optimization problems including string matching, computational social choice, resource allocation, etc (see ,e.g.~\cite{knop2019combinatorial,faliszewski2018opinion,knop2020voting,chen2018covering,jansen2018empowering,knop2018scheduling}). We give a brief introduction below.
\smallskip
\noindent\textbf{Block-structured IP.}
We consider IP~\eqref{ILP} where $H$ is built from small submatrices $A$, $B$, $C$ and $D$ in the following form:
\begin{eqnarray}\label{eq:4block}
H=
\begin{pmatrix}
C & D & D & \cdots & D \\
B & A & 0  &   & 0  \\
B & 0  & A &   & 0  \\
\vdots &   &   & \ddots &   \\
B & 0  & 0  &   & A
\end{pmatrix}. 
\end{eqnarray}
Here, $A,B,C,D$ are $s_i\times t_i$ matrices, where $i=A,B,C,D$, respectively. $H$ consists of $n$ copies of $A,B,D$ and one copy of $C$. Consequently, $N=t_B+nt_A$ and $M=s_C+ns_B$. Notice that by plugging $A,B,C,D$ into the above block structure we require that $s_C=s_D$, $s_A=s_B$, $t_B=t_C$ and $t_A=t_D$. 

The above IP is called 4-block $n$-fold IP. As a special case, when $C=B=0$, it is called $n$-fold IP; when  $C=D=0$, it is called two stage-stochastic IP. It is worth mentioning that recently researchers have also considered more generalized IPs where the submatrices $A,B,D$ are not necessarily identical (i.e., the $n$ identical $A$'s, $B$'s, $D$'s are replaced with $A_i,B_i,D_i$, respectively). We call it generalized 4-block $n$-fold IP, and its two special cases generalized $n$-fold IP and generalized two stage-stochastic IP.  

\smallskip
\noindent\textbf{Related work on Block-structured IP.}
Let $\varphi$ be the encoding length of a block-structured IP. For $n$-fold IP, Hemmecke et al.~\cite{hemmecke2013n} showed an algorithm of running time $n^3t_A^3\varphi\cdot (s_Ds_A\Delta)^{\mathcal{O}(t_A^2s_D)}$. Later on, improved algorithms were developed by a series of researchers including Eisenbrand et al.~\cite{eisenbrand2018faster,eisenbrand2019algorithmic}, Altmanov{\'a} et al.~\cite{altmanova2019evaluating}, Jansen et al.~\cite{jansen2019near}, Cslovjecsek et al.~\cite{cslovjecsek2020n}. So far, generalized $n$-fold IP can be solved in $(s_Ds_A\Delta)^{O(s_A^2+s_As_D^2)} nt_A$. Specifically, if $A=(1,\ldots,1)$ in an $n$-fold IP, then this is called combinatorial $n$-fold IP. Even such a restricted class of IP finds applications in a variety of problems including computational social choice, stringology, etc.~\cite{knop2019combinatorial}.

For two-stage stochastic IP, Hemmecke and Schultz~\cite{hemmecke2003decomposition} were the first to present an algorithm of running time $poly(n)\cdot f(s_A,s_B,t_A,t_B,\Delta)$ for some computable function
$f$, despite that the function $f$ is unknown. Very recently, Klein~\cite{klein2020complexity} developed an algorithm of such a running for generalized two-stage stochastic IP where $f$ is a doubly exponential function.

For 4-block $n$-fold IP, Hemmecke et al.~\cite{hemmecke2010polynomial} gave an algorithm which runs in time $n^{g(s_D+s_A,t_B+t_A,\Delta)}\varphi$ for some computable function $g$ which is doubly exponential. Very recently, Chen et al.~\cite{chen2020new} presented an improved algorithm whose running time is singly exponential.

It is noticeable that early algorithms for $n$-fold IP has a running time exponential in both the number of rows and columns of the small submatrices~\cite{hemmecke2013n}, and recent progress is able to reduce the running time such that it is only exponential in the number of rows of submatrices, coinciding the running time of ``Papadimitrious's line" of algorithm for general IP. It is thus natural to ask, can we hope for a ``Lenstra's line" of algorithm for block-structured IP that is polynomial in $\log\Delta$? More precisely, can we expect an algorithm for block-structured IP of running time $f(s_A,s_B,s_C,s_D,t_A,t_B,t_C,t_D)poly(n,\log\Delta)$, or $(n\log\Delta)^{f(s_A,s_B,s_C,s_D,t_A,t_B,t_C,t_D)}$ if the former is not possible? This paper aims at a systematic study in this direction. 

\smallskip
\noindent\textbf{Our contributions.}
The major contribution of this paper is to give a full characterization on when FPT or XP algorithm exists for block-structured IP without $\Delta$, the largest coefficient, being part of the parameters.

We show that, in general, $n$-fold IP is NP-hard if $\Delta$ does not belong to the parameters. In particular, NP-hardness follows even if the submatrix $A=[1,1,\Delta]$.



On the positive side, we achieve the following algorithmic results:
\begin{itemize}
	\item	If $A=(1,\ldots,1)\in \Z^{1\times t_A} $, then 4-block $n$-fold IP can be solved in $(t_A+t_B)^{O(t_A+t_B)}\cdot poly(n,\log\Delta)$ time;
	\item If $A\in\Z^{s_A\times t_A} $, $t_A=s_A+1$ and $\text{rank}(A)=s_A$, then 4-block $n$-fold IP can be solved in $(t_A+t_B)^{O(t_A+t_B)}\cdot n^{O(t_A)}\cdot poly(\log\Delta)$ time; Specifically, $n$-fold IP can be solved in linear time $n\cdot poly(t_A,\log \Delta)$.
\end{itemize}
It is remarkable that our NP-hardness results already rule out an algorithm of running time $n^{f(t_A)}poly(\log\Delta)$ even for $n$-fold IP when $t_A\ge s_A+2$, hence an algorithm for $t_A=s_A+1$ is the best we can hope for.

One implication of our results is on the impact of the box constraint $\vel\le\vex\le\veu$ to the complexity of block-structured IP. Our NP-hardness result can be translated to the NP-hardness of the following scheduling problem: given $m$ identical machines and three types of jobs, each type of a job has the same processing time on every machine. Each machine $i$ has cardinality constraints such that it can accept at most $c_i^j$ jobs of type $j$ where $j=1,2,3$. The goal is to find an assignment of jobs to machines such that makespan (largest job completion time) is minimized. Note that, however, this scheduling problem is polynomial time solvable if there is no cardinality constraints~\cite{goemans2014polynomiality}. When formulating the scheduling problem using $n$-fold IP, the cardinality constraints hide in the box constraints $\vel\le\vex\le \veu$. Therefore, if we look at the $n$-fold IP formulation of the scheduling problem, a simpler box constraint $\vex\ge 0$ allows a polynomial time algorithm for three or even a constant number of different types of jobs, while a general box constraint $\vel\le\vex\le \veu$ only leads to polynomiality of two types of jobs. The reader will also see that the most technical part of our algorithm lies on the dealing of the box constraints. In contrast, essentially all existing algorithms for block-structured IP rely on an iterative augmentation framework which does not really distinguish between different kinds of box constraints. From that perspective, our algorithmic results can be viewed as a complement to existing algorithms. It remains as an important problem what kind of box constraints can lead to polynomial time algorithms when $t_A\ge s_A+2$.

\section{Preliminaries}
\noindent\textbf{Notation.}
We write vectors in boldface, e.g. $\vex, \vey$, and their entries in normal font, e.g. $x_i, y_i$.
Recall that a solution $\vex$ for $4$-block $n$-fold IP is a $(t_B+nt_A)$-dimensional vector, we write it into $n+1$ \emph{bricks}, such that $\vex=(\vex^0,\vex^1,\cdots,\vex^n)$ where $\vex^0 \in \Z^{t_B}$ and each $\vex^i \in \Z^{t_A}$, $1\le i\le n$. We call $\vex^i$ the \emph{$i$-th brick} for $0\le i\le n$. For a vector or a matrix, we write $\|\cdot\|_{\infty}$ to denote the maximal absolute value of its elements. For two vectors $\vex,\vey$ of the same dimension, $\vex\cdot\vey$ denotes their inner product. 
We use $\text{gcd}(\cdot,\cdot)$ to represent the greatest common divisor of two integers. For example, $\text{gcd}(\lambda,\mu)$ represents the greatest common divisor of integers $\lambda$ and $\mu$. 
We usually use lowercase letters for variables and uppercase letters for matrices. For an arbitrary matrix $H$, we use $\text{rank}(H)$ to denote its rank. We use $poly(x)$ to denote a polynomial in $x$. 

\smallskip
\noindent\textbf{Input size.} In an IP~\eqref{ILP}, it is allowed that the entries of $\veb, \vel,\veu$ are $\infty$. However, utilizing the techniques of Tardos~\cite{tardos1986strongly}, Koutecký et al.~\cite{koutecky2018parameterized} showed that without loss of generality we can restrict that $\|\veb\|_\infty, \|\vel\|_\infty, \|\veu\|_\infty\le 2^{O(n\log n)}\Delta^{O(n)}$. We assume this bound throughout this paper.


\smallskip
\noindent\textbf{B\'{e}zout's identity.}
Let $\lambda$ and $\mu$ be integers with greatest common divisor $\text{gcd}(\lambda,\mu)$. Then, there exist integers $x$ and $y$ such that $\lambda x + \mu y = \text{gcd}(\lambda,\mu)$. 

\smallskip
\noindent\emph{Structure of solutions.}
When an arbitrary solution $(\hat{x}, \hat{y})$ has been computed (e.g., using extended Euclidean algorithm), all pairs of solutions can be represented in the form
$\Big(\hat{x}+\ell{\frac {\mu}{\text{gcd}(\lambda,\mu)}}, \hat{y}-\ell{\frac {\lambda}{\text{gcd}(\lambda,\mu)}}\Big),$
where $\ell$ is an arbitrary integer.

\noindent\textbf{Smith normal form.}
Let $A$ be a nonzero $s\times t$ matrix over a principal ideal domain. $\bar{A}$ is called the Smith normal form of $A$: there exist invertible $s\times s$ and $(t\times t)$-matrices $U$, $V$ such that the product $UAV$ is $\bar{A}$, and its diagonal elements $\alpha_{i}$ satisfy $\alpha_i|\alpha_{i+1}$ for all $1\le i\le h-1$, where $h=\text{rank}(A)$. The rest elements in $\bar{A}$ are zero.

\noindent\emph{Remark.} The process of transforming an integer matrix into its Smith normal form is in polynomial time, i.e., $poly(s,t,\log \Delta)$~\cite{kannan1979polynomial}.

\section{Hardness results}

Recall $n$-fold IP is a special case of $4$-block $n$-fold IP when $B=C=0$ in Eq~\eqref{eq:4block}. The goal of this section is to prove the following theorem.
\begin{theorem}\label{thm:np-nfold}
	It is NP-hard to determine whether an $n$-fold IP admits a feasible solution even if $A=(1,1,\Delta)$ and $D=(1,0,0)$, where $\Delta\in \Z$ is part of the input. 
\end{theorem}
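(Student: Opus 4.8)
The plan is to prove NP-hardness by a polynomial reduction from \textsc{Subset Sum} (equivalently \textsc{Partition}), which is NP-complete and, crucially, only \emph{weakly} so: it admits a pseudopolynomial algorithm. This matches the statement perfectly, since $\Delta$ enters the instance only through its $O(\log\Delta)$-bit encoding, and the input-size bound $\|\veb\|_\infty,\|\vel\|_\infty,\|\veu\|_\infty\le 2^{O(n\log n)}\Delta^{O(n)}$ leaves us room to write down targets and bounds that are exponential in $\log\Delta$. Concretely, given an instance $a_1,\dots,a_n\in\Z_{\ge 0}$ with target $t$ and total $V=\sum_i a_i$, I would build an $n$-fold IP with exactly one brick $\vex^i=(x^i_1,x^i_2,x^i_3)$ per item, with the prescribed blocks $A=(1,1,\Delta)$ and $D=(1,0,0)$, and I would set $\Delta>V$. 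Thus each brick carries one local equation $x^i_1+x^i_2+\Delta x^i_3=b_i$, the single global (linking) equation is $\sum_{i=1}^n x^i_1=b_0$, and the entire coupling between the bricks happens through this one equation.

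The idea is to make the large entry $\Delta$ act as a radix that separates scales. Since $\Delta>V$, reading the global target $b_0$ in base $\Delta$ splits the equation $\sum_i x^i_1=b_0$ into a high-order part, which should certify that each brick has made a legitimate binary choice (item $i$ selected or not), and a low-order part, which should equal the subset sum of the selected $a_i$'s. I would engineer the per-brick box constraints $\vel\le\vex\le\veu$ so that brick $i$ admits exactly two feasible integral configurations: a ``not selected'' configuration contributing $0$ and a ``selected'' configuration contributing a value that carries the weight $a_i$ into the low-order digit while contributing a fixed amount into the high-order digit. With $b_0$ chosen so that its high-order digit forces a consistent selection pattern and its low-order digit equals $t$, a feasible $\vex$ then yields a subset $S$ with $\sum_{i\in S}a_i=t$; conversely any such subset is realized by the corresponding per-brick choices. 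B\'ezout's identity guarantees that the local equations $x^i_1+x^i_2+\Delta x^i_3=b_i$ are always solvable (as $\gcd(1,1,\Delta)=1$), so the only real constraints are the box constraints and the global equation. Because all numbers involved stay within $\Delta^{O(1)}$ and there are $O(n)$ of them, the reduction is polynomial in $n+\log\Delta+\log(\max_i a_i)$.

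The hard part will be the gadget design, i.e.\ forcing a genuinely \emph{discrete} binary choice with an item-specific weight using nothing but interval box constraints and the single large coefficient $\Delta$. The danger is that a single local equation together with interval bounds tends to produce a whole interval (or a contiguous run of $\Delta$-spaced intervals) of feasible values for $x^i_1$, rather than two isolated options; if intermediate values survive, the global equation collapses to an easy one-dimensional feasibility test and the encoding of \textsc{Subset Sum} is lost. The crux of the proof is therefore to choose $b_i,\vel,\veu$ so that all ``in-between'' assignments for each brick are excluded and the coarse variable $x^i_3$ genuinely toggles the contribution between the two intended states, and then to verify both directions of the correctness argument (completeness and soundness) against these constraints. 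This is exactly the delicate interplay between the box constraints and the block structure that the paper emphasizes as the technical heart of the whole development, and the same construction should yield the claimed instance with $A=(1,1,\Delta)$ and $D=(1,0,0)$, and hence the translation to the three-job-type scheduling problem with cardinality constraints.
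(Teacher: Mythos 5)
Your proposal correctly identifies the reduction source (subset sum) and even pinpoints the right difficulty---forcing a genuinely discrete two-valued choice per brick using only the box constraints and the coefficient $\Delta$---but it stops exactly there: the gadget that resolves this difficulty is never given, and that gadget is the entire content of the proof. Saying that ``the crux is to choose $b_i,\vel,\veu$ so that all in-between assignments are excluded'' names the problem rather than solving it. Worse, the specific radix scheme you sketch does not obviously admit such a gadget: if the ``selected'' configuration is meant to contribute $\Delta+a_i$ (one unit into the high-order digit, $a_i$ into the low-order digit), then the natural instantiation of the local equation $x^i_1+x^i_2+\Delta x^i_3=b_i$ with $b_i=\Delta+a_i$ and interval bounds leaves a whole interval of feasible values in at least one branch (e.g., $x^i_3=1$ gives $x^i_1+x^i_2=a_i$, so $x^i_1$ can be anything in $[0,a_i]$), which is precisely the collapse you warn against. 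In addition, your high-order digit counts the cardinality of the selected set, so the global target $b_0=k\Delta+t$ forces $|S|=k$; subset sum does not prescribe $|S|$, so you would further need padding by dummy items (or a Turing reduction over all $k$), another detail the proposal does not supply.

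The paper closes this gap with one clean trick that your sketch lacks: it takes the subset-sum target itself to be $\Delta$ and uses \emph{complementary} upper bounds, namely $0\le x^i_1\le\beta_i$, $0\le x^i_2\le\Delta-\beta_i$, $0\le x^i_3\le 1$, with local equations $x^i_1+x^i_2+\Delta x^i_3=\Delta$ and global equation $\sum_{i=1}^n x^i_1=\Delta$. Then $x^i_3=1$ forces $x^i_1=x^i_2=0$, while $x^i_3=0$ forces $x^i_1+x^i_2=\Delta$, which---because the two upper bounds sum to exactly $\Delta$---can only be met with both variables saturated; hence $x^i_1\in\{0,\beta_i\}$ with no in-between values, no radix separation, and no cardinality issue, and feasibility is immediately equivalent to the subset-sum instance. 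Until you exhibit a gadget with this exclusion property (this one or another), your argument is a plan for a proof rather than a proof.
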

\begin{proof}
	We reduce from subset-sum. In a subset-sum problem, given are $n$ positive integers $\beta_1,\beta_2,\cdots,\beta_n$, and the goal is to find a subset of these integers which add up to exactly $\Delta\in\N$. 
	
	Given a subset-sum instance, we construct an $n$-fold integer program instance such that $A=(1,1,\Delta)$ and $D=(1,0,0)$. Note that each brick $\vex^i=(x^i_1,x^i_2,x^i_3)$. Let the interval constraints for variables be $0\le x^i_1\le \beta_i$, $0\le x^i_2\le \Delta-\beta_i$ and $0\le x^i_3\le 1$. Let $\veb^0=\veb^i=\Delta$. This finishes the construction.
	
	Now we write down explicitly the $n$-fold integer program as follows:
	\begin{subequations}
		\begin{eqnarray}
		&& \sum_{i=1}^n x^i_1=\Delta \label{np-1}\\
		&& x^i_1+x^i_2+\Delta x^i_3=\Delta, \hspace{37mm} \forall 1\le i\le n \label{np-2}\\
		&&  0\le x^i_1\le \beta_i, 0\le x^i_2\le \Delta-\beta_i, 0\le x^i_3\le 1,  \hspace{5mm}\forall 1\le i\le n \nonumber\\
		&& x^i_1, x^i_2, x^i_3\in \Z, \hspace{47mm}  \forall 1\le i\le n \nonumber
		\end{eqnarray}
	\end{subequations}
	
	Since $x^i_3\in\{0,1\}$, there are two possibilities. If $x^i_3=1$, then $x^i_1=x^i_2=0$; otherwise, $x^i_1+x^i_2=\Delta$. As $x^i_1\le \beta_i$ and $x^i_2\le \Delta-\beta_i$, we have $x^i_1=\beta_i$ and $x^i_2=\Delta-\beta_i$ if $x^i_3=0$. Hence, $x^i_1$ is either $0$ or $\beta_i$. By Constraint~\eqref{np-1}, the constructed $n$-fold integer program instance admits a feasible solution if and only if there exists a subset of $\{\beta_1,\beta_2,\cdots,\beta_n\}$ whose sum is $\Delta$. Hence, $n$-fold IP is NP-hard even if $s_D=s_A=1$, and $t_A= 3$.  \qed
\end{proof}
\noindent\textbf{Remark.} Theorem~\ref{thm:np-nfold} also implies the NP-hardness of the following scheduling problem. There are $n$ machines and three types of jobs. The 1st and 2nd type of jobs have a processing time of 1, and the 3rd type of jobs have a processing time of $\Delta$. Each machine $i$ can accept at most $\beta_i$ jobs of type $1$, $\Delta-\beta_i$  jobs of type $2$, and $1$ job of type 3. Given $\Delta$ jobs of type 1, $(n-k-1)\Delta$ jobs of type $2$ and $k$ jobs of type 3, is it possible to schedule all the jobs within makespan $\Delta$? Let $x_j^i$ be the number of jobs of type $j\in\{1,2,3\}$ on machine $i$, we can establish a similar IP as that in the proof of Theorem~\ref{thm:np-nfold} and the NP-hardness follows directly. 

Enforcing dummy constraints, we have the following corollary.
\begin{corollary}
It is NP-hard to determine whether an $n$-fold IP admits a feasible solution if $A\in\Z^{s_A\times t_A} $ and $t_A\ge s_A+2$.\label{n-15}
\end{corollary}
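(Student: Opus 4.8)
The plan is to deduce the corollary from Theorem~\ref{thm:np-nfold} by a straightforward padding argument, exploiting the fact that the hard instance constructed there already realizes the boundary case $s_A=1$, $t_A=3=s_A+2$. For a target pair $(s_A,t_A)$ with $t_A\ge s_A+2$, I would start from the $n$-fold IP of Theorem~\ref{thm:np-nfold} and enlarge each brick $\vex^i$ from three coordinates to $t_A$ coordinates, keeping the original variables $x_1^i,x_2^i,x_3^i$ and adjoining $s_A-1$ new variables (to be pinned down by dummy rows) together with $k:=t_A-(s_A+2)\ge 0$ further slack variables.

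Concretely, I would take $A$ to have first row $(1,1,\Delta,0,\ldots,0)$ (the original block constraint~\eqref{np-2}), rows $2,\ldots,s_A$ each consisting of a single $1$ in one of the $s_A-1$ new columns and zeros elsewhere, and the last $k$ columns identically zero. The matrix $D$ becomes $(1,0,\ldots,0)\in\Z^{1\times t_A}$, preserving the linking constraint~\eqref{np-1}. Setting the right-hand side of each dummy row to $0$, imposing the box constraints $0\le x_1^i\le\beta_i$, $0\le x_2^i\le\Delta-\beta_i$, $0\le x_3^i\le 1$ on the original variables, and clamping every newly introduced variable to $0$ via its box constraint, forces all new variables to vanish. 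Hence the enlarged program is feasible exactly when the original one is, i.e. exactly when the underlying subset-sum instance is a yes-instance. Since the excess width $k$ can be chosen to hit any $t_A\ge s_A+2$, this yields NP-hardness for every admissible pair $(s_A,t_A)$; note moreover that the rows are linearly independent, so $\text{rank}(A)=s_A$, which shows the hardness persists even under the full-row-rank assumption used in the positive results.

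I do not expect a genuine obstacle here, as the difficulty is purely bookkeeping. The two points that require care are: first, verifying that the padded matrix still fits the $n$-fold block structure with identical diagonal blocks and the correct dimension relations ($t_D=t_A$, $B=C=0$); and second, confirming that clamping the dummy variables to zero through the box constraints introduces no spurious feasible or infeasible solutions, so that the reduction preserves answers. Once these are checked, the corollary follows immediately from Theorem~\ref{thm:np-nfold}.
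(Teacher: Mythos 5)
Your proposal is correct and is essentially the paper's own argument: the paper disposes of this corollary with the single phrase ``Enforcing dummy constraints,'' and your padded matrix (original row $(1,1,\Delta)$ extended by zeros, unit rows pinning the $s_A-1$ new variables, zero columns for the remaining slack variables, all clamped to $0$ by box constraints) is exactly the bookkeeping that phrase leaves implicit. Your added observation that the padded $A$ still has $\mathrm{rank}(A)=s_A$ is a worthwhile refinement, since it shows the hardness boundary is governed by $t_A-s_A$ rather than by any rank deficiency.
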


We remark that if we further consider generalized $n$-fold IP where the first row is $(D_1,D_2,\cdots,D_n)$ and the lower diagonal is $(A_1,A_2,\cdots,A_n)$, then essentially all non-trivial cases become NP-hard as is implied by the following theorem. Therefore, we restrict our attention to the standard 4-block $n$-fold IP in this paper.

\begin{theorem}\label{th2}
	It is NP-hard to determine whether a generalized $n$-fold IP admits a feasible solution even if one of the following holds:
	\begin{compactitem}
	    \item $A_i=A=(\Delta,1)$, $D_i=(\beta_i,0)$; or
	    \item $A_i=(1,\beta_i)$, $D_i=D=(1,0)$.
	\end{compactitem}
\end{theorem}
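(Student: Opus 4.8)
The plan is to mimic the subset-sum reduction from Theorem~\ref{thm:np-nfold}, but now exploit the extra freedom that a \emph{generalized} $n$-fold IP allows each brick to carry a different pair $(A_i, D_i)$. In both cases the submatrices are $1\times 2$, so each brick is $\vex^i=(x^i_1,x^i_2)$, there is a single linking constraint coming from the top block row $\sum_i D_i\vex^i=\veb^0$, and a single local constraint $A_i\vex^i=b^i$ per brick. I would reduce from subset-sum, where we are given positive integers $\beta_1,\dots,\beta_n$ and a target $\Delta$, and must decide whether some subset sums to $\Delta$.

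\emph{First case.} With $A_i=(\Delta,1)$ and $D_i=(\beta_i,0)$, I would set the local constraint to $\Delta x^i_1 + x^i_2 = \Delta$ together with box constraints $0\le x^i_1\le 1$ and $0\le x^i_2\le \Delta$ (so that $x^i_2\ge 0$ forces $x^i_1\le 1$ anyway). Since $x^i_1\in\{0,1\}$, each brick encodes a Boolean choice: $x^i_1=1$ forces $x^i_2=0$, and $x^i_1=0$ forces $x^i_2=\Delta$. The top constraint $\sum_i \beta_i x^i_1 = \Delta$ then says exactly that the chosen subset $\{i : x^i_1=1\}$ has total weight $\Delta$, giving the desired equivalence.

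\emph{Second case.} With $A_i=(1,\beta_i)$ and $D_i=(1,0)$, I would take the local constraint $x^i_1 + \beta_i x^i_2 = \beta_i$ with $0\le x^i_2\le 1$ and $0\le x^i_1\le \beta_i$. Again $x^i_2\in\{0,1\}$ is Boolean: $x^i_2=1$ gives $x^i_1=0$, while $x^i_2=0$ gives $x^i_1=\beta_i$, so $x^i_1\in\{0,\beta_i\}$. The top constraint $\sum_i x^i_1=\Delta$ is then feasible iff a subset of the $\beta_i$ sums to $\Delta$. One must be a little careful when $\beta_i=0$, but since subset-sum can be taken with strictly positive weights this is harmless. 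In both constructions the right-hand side vectors $\veb^0,\veb^i$ and the bounds are of size polynomial in the input, so the reduction is polynomial.

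The main point to verify — and the only genuinely nontrivial step — is that the per-brick constraint, via the box constraints, really \emph{forces} the first coordinate into a two-valued set $\{0,\beta_i\}$ (resp.\ the Boolean structure on the pivot variable); everything else is a direct rewriting of the subset-sum constraint. I expect the obstacle to be purely bookkeeping: choosing the orientation of each $(A_i,D_i)$ pair and the bounds so that the integrality of the single local equation, rather than an explicit $0/1$ declaration, pins down the binary behaviour. Once that is checked, the equivalence with subset-sum is immediate and NP-hardness follows. \qed
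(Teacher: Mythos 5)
Your proposal is correct and matches the paper's own proof essentially verbatim: both cases use the same subset-sum reduction, the same box constraints ($0\le x^i_1\le 1$, $0\le x^i_2\le\Delta$ in the first case; $0\le x^i_1\le\beta_i$, $0\le x^i_2\le 1$ in the second), the same right-hand sides, and the same Boolean-forcing argument on the pivot variable. The only addition is your remark about $\beta_i=0$, which, as you note, is moot since subset-sum instances have strictly positive weights.
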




Using a slight variation of the reduction we used in Theorem~\ref{thm:np-nfold}, we can show Theorem~\ref{th2}.

\paragraph{Proof of Theorem~\ref{th2}.}
\begin{compactitem}
	    \item 

We reduce from subset-sum. In a subset-sum problem, given are $n$ positive integers $\beta_1,\beta_2,\cdots,\beta_n$, and the goal is to find a subset of these integers which add up to exactly $\Delta\in\N$. 
	
	Given a subset-sum instance, we construct an $n$-fold integer program instance such that $A=(\Delta,1)$ and $D_i=(\beta_i,0)$. Note that each brick $\vex^i=(x^i_1,x^i_2)$. Let the interval constraints for variables be $0\le x^i_1\le 1$, and $0\le x^i_2\le \Delta$. Let $\veb^0=\veb^i=\Delta$. This finishes the construction.
	
	Now we write down explicitly the generalized $n$-fold integer program as follows:
	\begin{subequations}
	\begin{eqnarray}
	&& \sum_{i=1}^n \beta_ix^i_1=\Delta \label{npp-1}\\
	&& \Delta x^i_1+x^i_2=\Delta, \hspace{38mm} \forall 1\le i\le n \\
	&&  0\le x^i_1\le 1, 0\le x^i_2\le \Delta,   \hspace{23mm}\forall 1\le i\le n \nonumber\\
	&& x^i_1, x^i_2\in \Z, \hspace{43mm}  \forall 1\le i\le n \nonumber
	\end{eqnarray}
	\end{subequations}
	
	Since $x^i_1\in\{0,1\}$, by Constraint~\eqref{npp-1}, we know that  the constructed $n$-fold integer program instance admits a feasible solution if and only if there exists a subset of $\{\beta_1,\beta_2,\cdots,\beta_n\}$ whose sum is $\Delta$. Hence, the generalized $n$-fold IP is NP-hard even if  $A\in \Z^{1\times 2}$.\qed
	\item 
	We still reduce from subset-sum. In a subset-sum problem, given are $n$ positive integers $\beta_1,\beta_2,\cdots,\beta_n$, and the goal is to find a subset of these integers which add up to exactly $\Delta\in\N$. 
	
	Given a subset-sum instance, we construct an $n$-fold integer program instance such that $A_i=(1,\beta_i)$ and $D=(1,0)$. Each brick $\vex^i=(x^i_1,x^i_2)$. Let the interval constraints for variables be $0\le x^i_1\le \beta_i$, and $0\le x^i_2\le 1$. Let $\veb^0=\Delta$ and $\veb^i=\beta_i$. This finishes the construction.
	
	Now we write down the generalized $n$-fold integer program as follows:
	\begin{subequations}
	\begin{eqnarray}
	&& \sum_{i=1}^n x^i_1=\Delta \label{npp-2}\\
	&& x^i_1+\beta_ix^i_2=\beta_i, \hspace{36mm} \forall 1\le i\le n \\
	&&  0\le x^i_1\le \beta_i, 0\le x^i_2\le 1,   \hspace{23mm}\forall 1\le i\le n \nonumber\\
	&& x^i_1, x^i_2\in \Z, \hspace{43mm}  \forall 1\le i\le n \nonumber
	\end{eqnarray}
	\end{subequations}	
	
	We know $x^i_2\in\{0,1\}$, when $x^i_2=0$, $x^i_1=\beta_i$; when $x^i_2=1$, $x^i_1=0$. Combining with Constraint~\eqref{npp-2}, we know that the constructed $n$-fold integer program instance admits a feasible solution if and only if there exists a subset of $\{\beta_1,\beta_2,\cdots,\beta_n\}$ whose sum is $\Delta$. Hence, the generalized $n$-fold IP is NP-hard even if  $A\in \Z^{1\times 2}$.\qed
\end{compactitem}

\section{Algorithms for $4$-block $n$-fold IP}

We complement our hardness results in Theorem~\ref{thm:np-nfold} by establishing algorithms for the following two cases: i). $A=(1,1,\cdots,1)\in \Z^{1\times t_A}$, i.e., $A$ is a $t_A$-dimensional vector that only consists of $1$; ii). $A\in\Z^{1\times 2} $, i.e., $A$ is a vector of dimension 2. We will further generalize the second case to $A\in\Z^{s_A\times t_A} $ where $t_A=s_A+1$ and $\text{rank}(A)=s_A$.

\subsection{The case of $A=(1,1,\cdots,1)$}
The goal of this subsection is to prove the following theorem.
\begin{theorem}\label{11-n}
	If $A=(1,\ldots,1)\in \Z^{1\times t_A} $, then $4$-block $n$-fold IP can be solved in time $(t_A+t_B)^{O(t_A+t_B)}\cdot poly(n,\log\Delta)$.
\end{theorem}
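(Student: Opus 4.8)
The plan is to exploit the fact that when $A=(1,\dots,1)$ every brick $\vex^i$ interacts with the rest of the program only through two low-dimensional quantities: the single scalar $\sum_j x^i_j$, which the $i$-th block row forces to equal $\sigma_i := \veb^i - B\vex^0$, and the vector $D\vex^i\in\Z^{s_D}$ that feeds into the first block row. I would therefore \emph{project out} the per-brick variables and keep only $\vex^0\in\Z^{t_B}$ together with the aggregate column totals $\vez := \sum_{i=1}^n \vex^i \in \Z^{t_A}$. In these variables the first block row reads $C\vex^0 + D\vez = \veb^0$, and summing the $n$ block-row constraints gives the consistency condition $\mathbf 1^\top \vez = \sum_i \veb^i - n\,(B\vex^0)$. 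Both are linear in the $t_A+t_B$ master variables, and by the bound $\|\veb\|_\infty,\|\vel\|_\infty,\|\veu\|_\infty\le 2^{O(n\log n)}\Delta^{O(n)}$ from the preliminaries, all these variables have $poly(n,\log\Delta)$ bit-length.

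\textbf{The transportation viewpoint.} The key structural step I would establish is that once $\vex^0$ (hence all supplies $\sigma_i$) and the totals $\vez$ are fixed, reconstructing the individual bricks at maximum profit is exactly a \emph{transportation problem}: bricks $1,\dots,n$ are supply nodes with supply $\sigma_i$, the $t_A$ columns are demand nodes with demand $z_j$, the cell value $x^i_j\in[l^i_j,u^i_j]$ carries unit profit $w^i_j$, and the row/column balance constraints are precisely $\sum_j x^i_j=\sigma_i$ and $\sum_i x^i_j=z_j$. The transportation constraint matrix is totally unimodular, so its LP relaxation is integral and it is solvable in strongly polynomial time; write $\Phi(\vex^0,\vez)$ for its optimal value ($-\infty$ if infeasible), and note that $\Phi$ depends on $\vex^0$ only through the single scalar $p:=B\vex^0$. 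The whole program then collapses to the master problem of maximizing $\vew^0\cdot\vex^0+\Phi(\vex^0,\vez)$ over the $t_A+t_B$ integer variables, subject to the two linear systems above and the box $\vel^0\le\vex^0\le\veu^0$.

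\textbf{Solving the master problem.} Since this has only $t_A+t_B$ variables, the target time $(t_A+t_B)^{O(t_A+t_B)}poly(n,\log\Delta)$ points to a Lenstra/Kannan-type algorithm, so the work is to turn the master into a genuine fixed-dimension integer program despite $\Phi$. I would treat the two difficulties introduced by the box constraints separately. First, the domain of $\Phi$ (transportation feasibility) is governed by Hoffman's circulation conditions; because there are only $t_A$ demand nodes, these collapse to $2^{O(t_A)}$ linear inequalities in the $\sigma_i$ and $z_j$, and hence linear in the master variables, which is affordable within the FPT budget. Second, $\Phi$ is a concave piecewise-linear function of the right-hand side $(p,\vez)$; I would either binary-search on a target value $\gamma$ and test via Kannan's algorithm whether the superlevel set $\{(\vex^0,\vez): \vew^0\cdot\vex^0+\Phi\ge\gamma\}$ intersected with the $2^{O(t_A)}$ feasibility inequalities contains an integer point, or enumerate the linear pieces of $\Phi$ cut out by the optimal transportation basis and run Kannan once per piece.

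\textbf{Main obstacle.} I expect the crux to be exactly this last point: converting the concave piecewise-linear profit $\Phi$, together with the transportation feasibility region, into a small \emph{linear} integer program that Kannan's algorithm can digest, while ensuring that neither the number of pieces nor the magnitude of the coefficients smuggles a dependence on $\Delta$ back into the running time. This is precisely the technically delicate part the introduction alludes to when it states that the box constraints are what make the problem hard; handling them through this transportation/flow viewpoint, rather than through the $\Delta$-dependent augmentation framework used by earlier $n$-fold algorithms, is the heart of the argument.
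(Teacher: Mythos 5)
Your structural insight --- that once $\vex^0$ and the aggregate $\vez=\sum_{i}\vex^i$ are fixed, reconstructing the bricks is a totally unimodular transportation problem --- is exactly the insight the paper's proof rests on. But your proposal stalls at the step you yourself flag as the main obstacle, and that obstacle is an artifact of your decision to \emph{project out} the bricks into a value function $\Phi$. The paper never forms $\Phi$: it keeps all brick variables in the program but relaxes them to be \emph{continuous}, obtaining a linear mixed-integer program $(\text{MIP}_2)$ in which only the $t_A+t_B$ coordinates of $(\vex^0,\vey)$ are integral while the $nt_A$ brick coordinates are real. Kannan's algorithm applies to mixed programs with a fixed number of integer variables, so $(\text{MIP}_2)$ is solved directly in $(t_A+t_B)^{O(t_A+t_B)}\cdot poly(n,\log\Delta)$ time; then, fixing the integral part $(\vex_*^0,\vey_*)$ of its optimum, the residual LP in the bricks $(\text{LP}_3)$ is an assignment/transportation LP with integral data, so total unimodularity yields an integral brick solution of equal objective value, which together with $(\vex_*^0,\vey_*)$ solves the original IP. Thus the concave piecewise-linear objective, the Hoffman feasibility conditions, the binary search, and any convex-IP machinery all disappear: one linear MIP solve plus one polynomial-time rounding step suffice.

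As written, your plan therefore has a genuine gap at the decisive step, and of the two fills you offer, one is risky and the other incomplete. Piece enumeration is risky: the linear pieces of $\Phi$ over $(p,\vez)$-space are cut out by optimal-basis regions of the transportation LP, whose number can grow like $n^{\Theta(t_A)}$, degrading the claimed FPT bound to an XP bound. The binary-search route can be made rigorous, but it requires integer programming over a \emph{convex} set given by a separation oracle (supergradients from transportation duals at feasible points, Farkas certificates at infeasible ones), which is strictly heavier machinery than the linear fixed-dimension algorithm you cite, and none of it is constructed in your sketch. Separately, your claim that Hoffman's conditions collapse to $2^{O(t_A)}$ \emph{linear} inequalities is imprecise: for each of the $2^{t_A}$ column subsets, the minimization over row subsets produces a constraint that is only piecewise linear in $p=B\vex^0$, contributing an extra factor of $O(n)$ inequalities --- harmless for the time bound, but a sign that the projection is messier than you assert. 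The moral of the paper's proof is that relaxing the bricks, rather than projecting them out, lets total unimodularity do its work \emph{after} the fixed-dimension solve instead of inside it.
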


\begin{proof}
	We write the $4$-block $n$-fold IP explicitly as follows: 
	\begin{eqnarray}
	(\text{IP}_1): &\max&	 \vew\vex\nonumber\\
	&& C\vex^0+D\sum_{i=1}^{n}\vex^i=\veb^0 \nonumber\\
	&&B\vex^0+(1,\ldots,1)\vex^i=\veb^i, \hspace{24mm}\forall 1\le i \le n\nonumber \\
	&&\vel^i \le	\vex^i\le \veu^i, \hspace{42mm}\forall 0\le i \le n\nonumber\\
	&&	\vex^0\in \Z^{t_B},	\vex^i\in \Z^{t_A}\hspace{33mm}\ \forall 1\le i \le n\nonumber
	\end{eqnarray}
	In what follows, we show that the above $(\text{IP}_1)$ is equivalent to the following mixed integer linear programming (MIP$_2$) which can be solved in FPT time.
	\begin{subequations}
		\begin{eqnarray*}
			(\text{MIP}_2):	&\max&	 \vew\vex\nonumber\\
			&&	\sum_{i=1}^{n}\vex^i=\vey \nonumber\\
			&&	C\vex^0+D\vey=\veb^0 \nonumber \\
			&&B\vex^0+	(1,\ldots,1)\vex^i=\veb^i, \hspace{22mm}\ \forall 1\le i \le n\nonumber\\
			&&	\vel^i \le	\vex^i\le \veu^i, \hspace{40mm}\ \forall 0\le i \le n\nonumber\\
			&&	\vey\in \Z^{t_A}, \vex^0\in \Z^{t_B}\nonumber\\
			&&	\vex^i\in \R^{t_A}\hspace{46.5mm}\ \forall 1\le i \le n
		\end{eqnarray*}
	\end{subequations}
	Notice that in $(\text{MIP}_2)$ we have $\vex^i\in \R^{t_A}$, whereas there are only $t_A+t_B$ integral variables in total. Applying Kannan's algorithm~\cite{kannan1987minkowski}, the optimal solution $(\vex_*,\vey_*)$ to $(\text{MIP}_2)$ can be computed in $(t_A+t_B)^{O(t_A+t_B)}\cdot poly(n,\log\Delta)$ time. 
	
	Next we show that the optimal solution to $(\text{IP}_1)$ can be derived in polynomial time based on $(\vex_*,\vey_*)$. Notice that in $(\vex_*,\vey_*)$, each brick $\vex_*^i$ may take fractional values, however, we can round them to integral values through the following LP:
	\begin{subequations}
		\begin{eqnarray}
		(\text{LP}_3):	&\max& \vew^0\vex_*^0+\sum_{i=1}^{n} \vew^i\vex^i\nonumber\\
		&&	\sum_{i=1}^{n}\vex^i=\vey_* \label{n-7}\\
		&&B\vex_*^0+	(1,\ldots,1)\vex^i=\veb^i, \hspace{23mm}\ \forall 1\le i \le n\label{n-8}\\
		&&	\vel^i \le	\vex^i\le \veu^i, \hspace{41mm}\ \forall 1\le i \le n\label{n-9}\\
		&&	\vex^i\in \R^{t_A}\hspace{48mm}\ \forall 1\le i \le n\nonumber
		\end{eqnarray}
	\end{subequations}
	Note that $(\text{LP}_3)$ is the linear program by plugging $\vex^0=\vex_*^0$ and $\vey=\vey_*$ into $(\text{MIP}_2)$, hence $\vex^i=\vex^i_*$ is an optimal solution to $(\text{LP}_3)$. Meanwhile, it is not difficult to see that $(\text{LP}_3)$ is essentially an LP for assignment problem, which is totally unimodular \cite{hoffman2010integral}. Hence an integral optimal solution $\vex^i=\bar{\vex}^i$ to $(\text{LP}_3)$ can be computed in $O(n^2t_A+nt_A^2)$ time (see, e.g., Theorem 11.2 in~\cite{korte2018combinatorial}) and it achieves the same objective value as the fractional optimal solution $\vex^i=\vex^i_*$. Therefore, $(\vex_*^0,\bar{\vex}^i,\vey_*)$ is also an optimal solution to $(\text{MIP}_2)$. Overall, we solve $(\text{MIP}_2)$, and hence $(\text{IP}_1)$ in $(t_A+t_B)^{O(t_A+t_B)}\cdot poly(n,\log\Delta)$ time, and Theorem~\ref{11-n} is proved.\qed
\end{proof}

As a corollary, we obtain similar result for $n$-fold IP:
\begin{corollary}
	For $n$-fold IP with $A=(1,\ldots,1)\in \Z^{1\times t_A} $, there exists an FPT algorithm of running time $t_A^{O(t_A)}\cdot poly(n,\log\Delta)$.\label{12-n}
\end{corollary}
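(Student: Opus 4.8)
The plan is to obtain Corollary~\ref{12-n} as a direct specialization of Theorem~\ref{11-n}, exploiting the fact that $n$-fold IP is exactly $4$-block $n$-fold IP with $B=C=0$. First I would set $B=C=0$ in the proof of Theorem~\ref{11-n} and trace through how the machinery simplifies. With $C=0$, the linking constraint $C\vex^0+D\sum_{i=1}^n\vex^i=\veb^0$ collapses to $D\sum_{i=1}^n\vex^i=\veb^0$, which involves no $\vex^0$ variables at all; with $B=0$, each block constraint becomes simply $(1,\ldots,1)\vex^i=\veb^i$. Thus the $\vex^0$ brick and the submatrix $t_B$ disappear entirely from the formulation, and the only global integral variable is $\vey=\sum_{i=1}^n\vex^i\in\Z^{t_A}$.

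The key observation is that after this simplification the intermediate program $(\text{MIP}_2)$ has exactly $t_A$ integer variables (the coordinates of $\vey$) rather than $t_A+t_B$, so applying Kannan's algorithm yields a running time of $t_A^{O(t_A)}\cdot poly(n,\log\Delta)$, matching the claimed bound. The rounding step via $(\text{LP}_3)$ carries over verbatim: with $B=0$ the constraints $\sum_{i=1}^n\vex^i=\vey_*$ and $(1,\ldots,1)\vex^i=\veb^i$ still describe a totally unimodular transportation/assignment polytope, so an integral optimum can be recovered in strongly polynomial time without loss of objective value. Hence the chain $(\text{IP}_1)\equiv(\text{MIP}_2)$, followed by integral rounding through $(\text{LP}_3)$, goes through unchanged.

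I do not expect any genuine obstacle here, since the corollary is a strict special case and the argument is already fully developed in the parent theorem; the only care needed is bookkeeping, namely verifying that dropping $B$ and $C$ does not destroy either the equivalence $(\text{IP}_1)\equiv(\text{MIP}_2)$ or the total unimodularity of $(\text{LP}_3)$, and that the parameter $t_B$ truly vanishes from the count of integral variables so that the exponent drops from $t_A+t_B$ to $t_A$. The cleanest write-up would simply state that setting $B=C=0$ in Theorem~\ref{11-n} removes the $\vex^0$ brick, leaves $t_A$ integral variables, and preserves the total unimodularity of the assignment LP, thereby yielding the running time $t_A^{O(t_A)}\cdot poly(n,\log\Delta)$ and proving the corollary.
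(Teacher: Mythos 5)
Your proposal is correct and follows exactly the route the paper intends: the paper states Corollary~\ref{12-n} as an immediate specialization of Theorem~\ref{11-n} with $B=C=0$, under which the $\vex^0$ brick becomes irrelevant, $(\text{MIP}_2)$ retains only the $t_A$ integral variables $\vey$, and the totally unimodular rounding via $(\text{LP}_3)$ is unaffected. Your bookkeeping of why the exponent drops from $t_A+t_B$ to $t_A$ is precisely the content the paper leaves implicit.
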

\subsection{The case of $A\in\Z^{s_A\times t_A}$, $t_A=s_A+1$ and $\text{rank}(A)=s_A$}
The goal of this subsection is to prove the following theorem.
\begin{theorem}
	If $A\in\Z^{s_A\times t_A} $ and $t_A=s_A+1$ and $\text{rank}(A)=s_A$, then $4$-block $n$-fold IP can be solved in time of $(t_A+t_B)^{O(t_A+t_B)}\cdot n^{O(t_A)}\cdot poly(\log\Delta)$. \label{thmm}
\end{theorem}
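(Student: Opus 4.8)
The plan is to collapse each brick to a single integer degree of freedom using the fact that $A$ has corank one, and then to mirror the aggregation-plus-rounding strategy of Theorem~\ref{11-n}, the new difficulty being that the rounding step is no longer totally unimodular.

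\emph{Parameterising the bricks.} First I would compute the Smith normal form $UAV=\bar A$ of $A$. Since $\text{rank}(A)=s_A=t_A-1$, it has the shape $\bar A=(\,\text{diag}(\alpha_1,\dots,\alpha_{s_A})\mid \mathbf 0\,)$, so the rational kernel of $A$ is one-dimensional; let $\veg$ be its primitive integer generator, i.e. the last column of $V$. Writing $\vex^i=V\vey^i$, the block equation $A\vex^i=\veb^i-B\vex^0$ turns into $\alpha_j y^i_j=(U(\veb^i-B\vex^0))_j$ for $j\le s_A$ with $y^i_{t_A}$ free. Hence, whenever the divisibility conditions $\alpha_j\mid (U(\veb^i-B\vex^0))_j$ hold, every integer solution of the $i$-th block is $\vex^i=\vep^i+\ell_i\veg$, where $\vep^i=\sum_{j\le s_A}\tfrac{(U(\veb^i-B\vex^0))_j}{\alpha_j}\vev_j$ is affine in $\vex^0$ and $\ell_i\in\Z$ is free. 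The divisibility conditions read $(UB\vex^0)_j\equiv (U\veb^i)_j\pmod{\alpha_j}$; they are consistent across bricks only if their right-hand sides agree modulo $\alpha_j$ (a feasibility test), and otherwise become $s_A$ congruences on $\vex^0$ that I would encode with $s_A$ auxiliary integer Bézout-quotient variables.

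\emph{Aggregating, following MIP$_2$.} Introducing $\vey=\sum_i\vex^i\in\Z^{t_A}$, the top equation becomes $C\vex^0+D\vey=\veb^0$ and the per-brick equalities $A\vex^i=\veb^i-B\vex^0$ are kept with the $\vex^i$ relaxed to reals. Exactly as in Theorem~\ref{11-n}, this is a mixed-integer program whose only integer variables are $\vex^0$, $\vey$ and the $s_A$ quotients, that is $O(t_A+t_B)$ of them, and Kannan's algorithm~\cite{kannan1987minkowski} solves it in $(t_A+t_B)^{O(t_A+t_B)}\cdot poly(n,\log\Delta)$ time. The point of the $\veg$-parameterisation is that, once $\vex^0$ and $\vey$ are fixed to integers, the residual choice in each brick is the single scalar $\ell_i$, the coupling being the one aggregate equality $\sum_i\ell_i\veg=\vey-\sum_i\vep^i$. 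When $B=C=0$ the variable $\vex^0$ decouples entirely and $\vep^i,\veg$ and the feasible interval of each $\ell_i$ are fixed once and for all; the residual problem $\max\{\sum_i(\vew^i\cdot\veg)\ell_i:\sum_i\ell_i=L,\ \ell_i\in[\text{lo}_i,\text{hi}_i]\cap\Z\}$ is then a single-constraint bounded program solvable by a greedy sweep, which yields the claimed linear-time bound $n\cdot poly(t_A,\log\Delta)$ for $n$-fold IP.

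\emph{The obstacle and its resolution.} In the case $A=(1,\dots,1)$ the analogue of the rounding program (LP$_3$) is an assignment polytope, hence totally unimodular, and a fractional optimum rounds to an integral one for free. Here $\veg$ has entries of absolute value larger than $1$, so a box face projects onto $\ell_i$ at a fractional endpoint $(u^i_m-p^i_m)/g_m$ even when $\vex^0$ is integral; the integer feasible interval of $\ell_i$ is a floor/ceiling of an affine function of $\vex^0$, and these floors are coupled through $\vex^0$. This destroys total unimodularity and creates a genuine integrality gap, so one cannot merely relax, optimise $\vex^0,\vey$ by Kannan, and round. I would resolve this with a bounded enumeration that keeps the box in the linear form $\vel^i\le\vep^i+\ell_i\veg\le\veu^i$ (so no floor appears explicitly). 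In the rounding LP the bricks are coupled only by the $t_A$ equations $\sum_i\vex^i=\vey$, so at a vertex all but $O(t_A)$ bricks sit at a vertex of their own one-dimensional local polytope; guessing this pivotal set costs $n^{O(t_A)}$, and for each guess I would solve a refined Kannan MIP in which the $O(t_A)$ pivotal $\ell_i$ are treated as integer variables while the remaining bricks are pinned to their box extremes. Each such program again has $O(t_A+t_B)$ integer variables, giving the overall bound $n^{O(t_A)}\cdot(t_A+t_B)^{O(t_A+t_B)}\cdot poly(\log\Delta)$.

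The step I expect to be most delicate is the last one. The routine parts are the Smith-form parameterisation and the aggregation; the real work is to prove that an integral optimum is captured by some configuration with only $O(t_A)$ pivotal bricks, and that fixing each non-pivotal brick to a box extreme is consistent with integrality of its entire $\vex^i$ (not just of the binding coordinate), so that the resulting mixed-integer program is exactly linear and loses nothing against the true optimum. Verifying this rounding/integralisation argument, together with the bound on the number of fractional bricks, is where the proof concentrates its effort.
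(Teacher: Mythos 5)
Your Smith-form parameterisation and the aggregation step are sound and match the paper's starting point (the paper makes one extra simplification you skip: it subtracts the first brick's equation, so the determined coordinates of $\vey^i$ do not depend on $\vex^0$ and no congruences on $\vex^0$ are needed). The genuine gap is in your final step, and it is exactly the step you flag as delicate: the pivotal-set enumeration does not close the integrality gap you correctly identified. At a vertex of the relaxation, the non-pivotal bricks sit at endpoints of their one-dimensional local polytopes, but those endpoints are of the form $\ell_i=(u^i_m-p^i_m)/g_m$ with $|g_m|>1$ in general, i.e.\ they are \emph{fractional}; pinning a brick there makes its whole $\vex^i$ non-integral, and you cannot round the pinned bricks independently afterwards because they are coupled through $\sum_i\ell_i\veg=\vey-\sum_i\vep^i$. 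The alternative, pinning $\ell_i$ to $\lceil \mathrm{lo}_i\rceil$ or $\lfloor \mathrm{hi}_i\rfloor$, is not available inside a Kannan call either, because these are floors/ceilings of \emph{affine functions of the integer variable} $\vex^0$ (equivalently of $\vex^1$), so the ``refined MIP'' is no longer a linear mixed-integer program. The claim you defer --- that some integral optimum has all but $O(t_A)$ bricks at box extremes, integrally --- is precisely what fails, so the proof as proposed does not go through.

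The missing idea (which is how the paper proceeds) is to control those floors directly rather than avoid them. Write $x^1_h=\xi_h+\theta_h z_h$ with the residue $\xi_h\in[0,|\theta_h|-1]$; then each brick's integer interval endpoints $d^i(\xi_h)=\lceil(\ell^i_h-\widetilde{\theta}^i_h-\xi_h)/\theta_h\rceil$ and $\bar{d}^i(\xi_h)=\lfloor(u^i_h-\widetilde{\theta}^i_h-\xi_h)/\theta_h\rfloor$ depend on $\vex^1$ only through the residues $\xi_h$, and each of the $n$ bricks contributes $O(1)$ critical residue values per coordinate. Hence the residue space splits into $(nt_A)^{O(t_A)}$ ``efficient sub-intervals'' on which \emph{all} floors and ceilings are simultaneously constant. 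Within one such region (and after a further polynomial split according to the ordering of the $z_h$'s), the per-brick free variables $p_i$ have identical coefficients in every constraint, so an exchange/greedy argument merges them into a single integer variable $p$, and each resulting subproblem is an IP with $O(t_A+t_B)$ integer variables solvable by Kannan's algorithm. In other words, the $n^{O(t_A)}$ factor in the theorem comes from enumerating residue regions that freeze the rounding, not from enumerating which bricks are fractional; your enumeration targets the wrong object, which is why the integrality of the non-pivotal bricks cannot be recovered.
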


Towards this, we start with the simpler case $A\in\Z^{1\times 2}$ to illustrate the main techniques.
\begin{theorem} If $A\in\Z^{1\times 2}$, then $4$-block $n$-fold IP can be solved in time of $t_B^{O(t_B)}\cdot poly(n,\log\Delta)$. \label{n-10}
\end{theorem}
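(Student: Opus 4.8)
The plan is to reduce the $4$-block $n$-fold IP to a mixed integer program with only $O(t_B)$ integer variables, exactly in the spirit of the proof of Theorem~\ref{11-n}, but with an extra preprocessing step that turns the single row $A=(a_1,a_2)$ of each lower block into a diagonal form. Writing $g=\gcd(a_1,a_2)$, I would use the Smith normal form (equivalently B\'{e}zout's identity) to compute a $2\times 2$ unimodular $V$ with $AV=(g,0)$. Substituting $\vex^i=V\vez^i$, a bijection on $\Z^2$, turns the $i$-th lower constraint $B\vex^0+A\vex^i=\veb^i$ into $B\vex^0+g\,z^i_1=\veb^i$, so that the first coordinate $z^i_1$ is \emph{completely determined} by $\vex^0$, namely $z^i_1=(\veb^i-B\vex^0)/g$, while the second coordinate $z^i_2$ stays a free integer parameter (this is the $\ell$ of the ``structure of solutions'' in the preliminaries).

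First I would record the feasibility this forces. Since $B\vex^0$ is a single scalar common to all bricks, a solution can exist only if all $\veb^i$ are congruent modulo $g$, and this congruence on $\vex^0$ is captured by one auxiliary integer variable $k$ with $B\vex^0-gk=\veb^1$. After the substitution, the objective (with transformed weights $\tilde{\vew}^i=\vew^i V$) and the top block $C\vex^0+D\sum_i\vex^i=\veb^0$ depend on the bricks only through $\vex^0$ and the aggregate $Y_2:=\sum_i z^i_2$: writing $\tilde D=DV$ and $Y_1=\sum_i z^i_1$, an affine function of $B\vex^0$, the top block reads $C\vex^0+\tilde D\,(Y_1,Y_2)^{\top}=\veb^0$. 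Thus I would form a MIP whose integer variables are $\vex^0$ (that is $t_B$ of them), the aggregate $Y_2$, and the auxiliary $k$, a total of $t_B+2$, while relaxing the per-brick $z^i_2$ to reals, and solve it with Kannan's algorithm~\cite{kannan1987minkowski} in $t_B^{O(t_B)}\cdot poly(n,\log\Delta)$ time. Given integral $\vex^0$ and $Y_2$, the residual program in the $z^i_2$ is a single-coupling-constraint problem $\max\sum_i\tilde w^i_2 z^i_2$ subject to $\sum_i z^i_2=Y_2$ and interval bounds; its constraint matrix is an interval (network) matrix, hence totally unimodular, so, exactly as with the assignment LP in Theorem~\ref{11-n}, an integral optimum can be recovered in polynomial time.

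The hard part will be the box constraints $\vel^i\le V\vez^i\le\veu^i$. Under the shear $V$ these become $\vel^i\le z^i_1 V_{\cdot 1}+z^i_2 V_{\cdot 2}\le\veu^i$, and eliminating the fixed $z^i_1$ confines $z^i_2$ to an interval with endpoints $\lceil\alpha_i\rceil$ and $\lfloor\beta_i\rfloor$, where $\alpha_i,\beta_i$ are \emph{affine} in $B\vex^0$ but carry denominators $a_1/g$ and $a_2/g$. So the genuine integer bounds on $z^i_2$ are floors and ceilings of affine functions of the integer quantity $B\vex^0$, which are not themselves linear; and one checks on small examples such as $A=(1,2)$ that relaxing $z^i_2$ over the \emph{rational} interval $[\alpha_i,\beta_i]$ strictly overestimates the true optimum. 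The whole technical weight of the theorem therefore sits in forcing the relaxation to use the correct integer bounds. I would attack this by separating, for each brick, the integral part of each bound from its fractional offset and showing that these offsets interact with the rest of the program only through $\sum_i z^i_2=Y_2$; then, once Kannan's algorithm has fixed $\vex^0$ so that all bounds become concrete integers, I would re-solve the residual bounded single-constraint IP exactly by a greedy exchange argument and certify through a proximity estimate that its integral value matches the value the MIP should report. This floor/ceiling phenomenon is what stays cheap here precisely because $t_A=2$ leaves a one-dimensional free direction per brick; for the general case $t_A=s_A+1$, $\mathrm{rank}(A)=s_A$ of Theorem~\ref{thmm} the same plan goes through after replacing $(g,0)$ by the full Smith normal form, but controlling the box constraints then costs the extra $n^{O(t_A)}$ factor.
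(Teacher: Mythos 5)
Your setup (B\'{e}zout/Smith normal form, one free integer $z^i_2$ per brick, an aggregate $Y_2$, relaxing the bricks to reals and calling Kannan's algorithm on $O(t_B)$ integer variables) is the natural transplant of the proof of Theorem~\ref{11-n}, and you correctly identify where it breaks: after the unimodular change of variables the box constraints on $z^i_2$ become $\lceil\alpha_i\rceil\le z^i_2\le\lfloor\beta_i\rfloor$ with $\alpha_i,\beta_i$ affine in $B\vex^0$ but carrying denominators, so the residual polytope no longer has integral vertices (total unimodularity does not help once the right-hand sides are fractional), and the MIP with the relaxed bounds $\alpha_i\le z^i_2\le\beta_i$ genuinely overestimates. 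But your proposed repair does not close this gap. Kannan's algorithm optimizes the \emph{relaxed} MIP; its optimal $\vex^0$ can be one for which the true integer intervals $[\lceil\alpha_i\rceil,\lfloor\beta_i\rfloor]$ are much smaller, or even empty, while the true optimum of the IP is attained at a different $\vex^0$ whose relaxed objective value is lower. Re-solving the residual single-constraint IP after Kannan has fixed $\vex^0$ therefore returns a suboptimal (possibly infeasible) answer, and the ``proximity estimate'' you invoke is never stated or proved: the rounding loss is $\Theta(1)$ in each $z^i_2$, so over $n$ bricks the gap between relaxed and true value can grow like $n\cdot\|\vew\|_\infty$, which no generic proximity argument controls. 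A further flaw: your claim that the objective depends on the bricks only through $Y_2$ is false as stated, because the transformed weights $\tilde{w}^i_2=(\vew^iV)_2$ differ across bricks; this is exactly why the position of each individual interval matters and cannot be aggregated away.

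What the paper does instead, and what is missing from your plan, is to make the floors and ceilings \emph{piecewise constant by enumeration}. The bounds depend on the first brick's coordinates only through their residues $\xi_h$ modulo $\theta_h$ (in your coordinates, through residues of $B\vex^0$ modulo the denominators $a_1/g$ and $a_2/g$), and as $\xi_h$ sweeps its range each of the $2n$ floor/ceiling expressions changes value exactly once, at a computable critical point. Hence each residue range splits into $O(n)$ ``efficient sub-intervals,'' $O(n^2)$ combinations in total, on each of which every bound is a fixed integer plus a linear function of the quotient variables $z_1,z_2$. The paper then removes the resulting $\max/\min$ structure by enumerating $O(n)$ orderings of $z_1-z_2$, and --- since within one regime all per-brick variables $p_i$ have identical constraint columns --- collapses them into a single aggregate $p$ via a greedy exchange argument over $n$ breakpoints, leaving $poly(n)$ many IPs with $t_B+O(1)$ integer variables each, which Kannan's algorithm solves in $t_B^{O(t_B)}poly(\log\Delta)$ time apiece. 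Your remark that the fractional offsets should ``interact only through $\sum_i z^i_2$'' is the right intuition trying to emerge, but without the sub-interval enumeration and the merging step your MIP never sees the correct integer bounds, and the theorem does not follow from the argument as proposed.
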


\begin{proof}
	Let $A=(\lambda,\mu)$, we write the constraints of 4-block $n$-fold IP explicitly as follows: 
	\begin{subequations}
		\begin{eqnarray}
		&& C\vex^0+D\sum_{i=1}^{n}\vex^i=\veb^0\label{n-16} \\
		&&B\vex^0+\lambda x_1^i+\mu x_2^i=\veb^i,\hspace{26mm}\forall 1\le i \le n\label{n-17} \\
		&&\vel^i \le	\vex^i\le \veu^i, \hspace{42.5mm}\forall 0\le i \le n\nonumber
		\end{eqnarray}
	\end{subequations}
	
	\noindent\textbf{Step 1. Use the B\'{e}zout's identity to simplify~\eqref{n-16} and~\eqref{n-17}.}
	
	We subtract $B\vex^0+\lambda x^1_1+\mu x^1_2=\veb^1$ from both sides of Eq~\eqref{n-17}, and get the following: $\lambda(x^i_1-x^1_1)+\mu(x^i_2-x^1_2)=\veb^i-\veb^1.$ Then we let $\theta_1=\frac{\mu}{\text{gcd}(\lambda,\mu)}, \theta_2=-\frac{\lambda}{\text{gcd}(\lambda,\mu)},$ where recall $\text{gcd}(\lambda,\mu)$ represents the greatest common divisor of $\lambda$ and $\mu$. 
	According to the B\'{e}zout's identity, we can get the following general solution: 
	\begin{eqnarray}
	&& x_h^i=\hat{x}^i_{h}+\theta_h y_i +x_h^1,\quad h=1,2, i=2,3,\cdots, n\label{nn3}
	\end{eqnarray}
	where $(\hat{x}^i_{1},\hat{x}^i_{2})$ is an arbitrary solution to  $\lambda\hat{x}^i_{1}+\mu\hat{x}^i_{2}=\veb^i-\veb^1$.  To be consistent, we introduce dummy variables $\hat{x}_h^1=0$ for $h=1,2$ and $y_1=0$, whereas Eq~\eqref{nn3} also holds for $i=1$.
	
	Notice that from now on $\theta_h$, $\hat{x}_h^i$ are all fixed values.
	
	By Eq~\eqref{nn3}, we have 
	\begin{eqnarray}
	&& \sum_{i=1}^{n}x_h^i=\sum_{i=1}^{n}\hat{x}^i_{h}+\theta_h\sum_{i=1}^{n} y_i+nx_h^1,\quad h=1,2 \nonumber
	\end{eqnarray}
	Plug the above into Eq~\eqref{n-16}, we have
	\begin{eqnarray}
	&&	C\vex^0+ D \left(
	\begin{array}{c}
	\sum_{i=1}^{n}\hat{x}^i_{1}+\theta_1\sum_{i=1}^{n}y_i+nx_1^1\\
	\sum_{i=1}^{n}\hat{x}^i_{2}+\theta_2\sum_{i=1}^{n}y_i +nx_2^1\\
	\end{array} \right) =\veb^0.\label{nn1}
	\end{eqnarray}
	Till now, we have transformed 4-block $n$-fold IP into an equivalent IP with variables $y_i$ and $x^1_h$ for	$1\le i\le n$ and $h=1,2$. 
	
	Next, we divide $x^1_h$ by $\theta_h$ and denote by $\xi_h$ and $z_h$ its remainder and quotient, respectively, that is,
	\begin{eqnarray}
	&& x_h^1=\xi_h+\theta_h z_h, \quad h=1,2,\label{nn2}
	\end{eqnarray}
	where $\xi_h\in [0,|\theta_h|-1]$. 
	
	Now we can rewrite the 4-block $n$-fold IP using new variables $\xi_h,z_h$ (where $h=1,2$) and $y_i$ (where $1\le i\le n$). 
	\begin{subequations}
		\begin{eqnarray}
		(\text{IP}_4):	&\max& \vew\vex=\vew^0\vex^0+c_0+\sum_{h=1}^2\sum_{i=1}^{n}w^i_h\xi_h+\sum_{h=1}^2\sum_{i=1}^{n}[w^i_h\theta_h(y_i+z_h)]\nonumber\\
		&&
		C\vex^0+ D \left(
		\begin{array}{c}
		\sum_{i=1}^{n}\hat{x}^i_{1}+\theta_1\sum_{i=1}^{n}y_i+n(\xi_1+\theta_1z_1)\\
		\sum_{i=1}^{n}\hat{x}^i_{2}+\theta_2\sum_{i=1}^{n}y_i +n(\xi_2+\theta_2z_2)\\
		\end{array} \right) =\veb^0  \\
		&&	B\vex^0+\lambda\xi_1+\mu\xi_2+\lambda z_1\theta_1+\mu z_2 \theta_2=\veb^1  \\
		&&	y_1=0  \\
		&&\vel^i \le	\vex^i\le \veu^i, \hspace{46mm}\forall 0\le i \le n \label{IP4:box}
		\end{eqnarray}
	\end{subequations}
	where $c_0:=\sum_{i=1}^{n}(w^i_1\hat{x}^i_{1}+w^i_2\hat{x}^i_{2})$ is a fixed value.
	
	It remains to replace the box constraints $\vel^i\le \vex^i\le \veu^i$ with respect to the new variables.
	
	\smallskip
	\noindent\textbf{Step 2. Deal with the box constraints $\vel^i\le \vex^i\le \veu^i$.}
	
	Plug Eq~\eqref{nn3} and Eq~\eqref{nn2} into the box constraint, we have that
	\begin{eqnarray}
	&& (\ell_h^i-\hat{x}^i_{h}-\xi_h)\le \theta_h(y_i+z_h)\le (u_h^i-\hat{x}^i_{h}-\xi_h), \quad \forall 1\le i\le n, h=1,2\label{eq:a}
	\end{eqnarray}
	To divide the fixed value $\theta_h$ on both sides we need to distinguish between whether it is positive or negative. For simplicity, we define 
	\begin{subequations}
		\begin{eqnarray}
		&&\text{If $\theta_h>0$, then }d^i(\xi_h)=\lceil\frac{\ell_h^i-\hat{x}^i_{h}-\xi_h}{\theta_h}\rceil, \quad \bar{d}^i(\xi_h)=\lfloor\frac{u_h^i-\hat{x}^i_{h}-\xi_h}{\theta_h}\rfloor, \label{eq:theta>0}\\
		&&\text{If $\theta_h<0$, then } d^i(\xi_h)=\lceil\frac{u_h^i-\hat{x}^i_{h}-\xi_h}{\theta_h}\rceil, \quad \bar{d}^i(\xi_h)=\lfloor\frac{\ell_h^i-\hat{x}^i_{h}-\xi_h}{\theta_h}\rfloor. \label{eq:theta<0}
		\end{eqnarray}
	\end{subequations}
	
	Then Eq~\eqref{eq:a} can be simplified as 
	\begin{eqnarray}
	&& d^i(\xi_h)\le y_i+z_h\le \bar{d}^i(\xi_h), \quad \forall 1\le i\le n, h=1,2. \label{eq:newbox}
	\end{eqnarray}
	Here we use the ceiling function to round up the left side and use the floor function to round down the right side since $y_i+z_h$ is an integer. 
	
	We emphasize that here $d^i(\xi_h)$ and $\bar{d}^i(\xi_h)$ are dependent on the variable $\xi_h$, however, since $\xi_h\in [0,|\theta_h|-1]$, either $d^i(\xi_h)$ or $\bar{d}^i(\xi_h)$ may take at most two different values. Hence, a straightforward counting yields $2^{2n}$ possibilities regarding the values for all $d^i(\xi_h)$ and $\bar{d}^i(\xi_h)$. However, notice that $d^i(\xi_h)$'s and $\bar{d}^i(\xi_h)$'s are not independent but change simultaneously as $\xi_h$ changes, we will show that we can divide the range $\xi_h\in [0,|\theta_h|-1]$ into a polynomial number of sub-intervals such that if $\xi_h$ lies in one sub-interval, then all $d^i(\xi_h)$'s and $\bar{d}^i(\xi_h)$'s take some fixed value. We call it an efficient sub-interval.
	
	
	In the following step 3 we will show that $(\text{IP}_4)$ can be solved in FPT time once each $\xi_h$ lies in one of the efficient sub-intervals (and hence all $d^i(\xi_h)$'s and $\bar{d}_i(\xi_h)$'s are fixed), and then in step 4 we prove there are only a polynomial number of different efficient sub-intervals.

	\smallskip
	\noindent\textbf{Step 3. Solve $(\text{IP}_4)$ in FPT time when each $\xi_h$ lies in one efficient sub-interval.}	
	
	For any $h$, let $[\tau_h,\bar{\tau}_h]$ be an arbitrary efficient sub-interval of $\xi_h$ such that all $d^i(\xi_h)$'s and $\bar{d}_i(\xi_h)$'s take fixed value for all $\xi_h\in [\tau_h,\bar{\tau}_h]$. We will handle in Step 4 the construction of each $[\tau_h,\bar{\tau}_h]$. 
	
	From now on we write $d^i(\xi_h)$ and $\bar{d}_i(\xi_h)$ as $d^i_h$ and $\bar{d}^i_h$ as they become fixed values. By Eq~\eqref{eq:newbox} we have
	\begin{eqnarray}
	&& \max\{d^i_1-z_1,d^i_2-z_2\}\le y_i\le \min\{\bar{d}^i_1-z_1,\bar{d}^i_2-z_2\}, \quad \forall 1\le i\le n \label{eq:box1}
	\end{eqnarray}
	Note that among $d^i_1-z_1$ and $d^i_2-z_2$, which one is larger solely depends on $d^i_1-d^i_2$ and $z_1-z_2$. Hence, to get rid of the $\max$ and $\min$ on both sides of Eq~\eqref{eq:box1} for $1\le i\le n$, we need to compare the value of $z_1-z_2$ with at most $2n$ distinct values, which are $d^i_1-d^i_2$ and $\bar{d}^i_1-\bar{d}^i_2$. Now we divide  $(-\infty,\infty)$ into at most $2n+1$ intervals based on the values of $d^i_1-d^i_2$ and $\bar{d}^i_1-\bar{d}^i_2$. Let these intervals be $I_1,I_2,\cdots,I_{2n+1}$. When $z_1-z_2$ lies in one of the intervals, say, $I_k$, Eq~\eqref{eq:box1} can be simplified as
	\begin{eqnarray}
	&& \ell^i(I_k,z_1,z_2)\le y_i\le u^i(I_k,z_1,z_2), \quad \forall 1\le i\le n \label{eq:box2}
	\end{eqnarray}
	where $\ell^i(I_k,z_1,z_2)$ and $u^i(I_k,z_1,z_2)$ are linear functions in $z_1$ and $z_2$. Recall that $y_1=0$, whereas $\ell^1(I_k,z_1,z_2)=u^1(I_k,z_1,z_2)=0$. For simplicity, we define a new variable $p_i:=y_i-\ell^i(I_k,z_1,z_2)$, then it is easy to see that\footnote{This is possible since $\|\vel\|_\infty, \|\veu\|_\infty \le 2^{O(n\log n)}\Delta^{O(n)}$ throughout this paper (see Preliminaries), and thus both the left and right sides are not $\infty$.}
	\begin{eqnarray}
	&& 0\le p_i\le u^i(I_k,z_1,z_2)-\ell^i(I_k,z_1,z_2), \quad \forall 1\le i\le n \label{eq:box3}
	\end{eqnarray}
	
	Now we rewrite $(\text{IP}_4)$ using new variables $p_i$ and $z_1,z_2$ as follows:
	\begin{subequations}
		\begin{eqnarray*}
			(\text{IP}_5[k]):		&\max& \vew\vex=  \vew^0\vex^0+\sum_{h=1}^2\sum_{i=1}^{n}w^i_h\xi_h+\sum_{h=1}^2\sum_{i=1}^{n}w^i_h\theta_h p_i+L(z_1,z_2)  \nonumber\\
			&&	C\vex^0+ D \left(
			\begin{array}{c}
				\sum_{i=1}^{n}\hat{x}^i_{1}+\theta_1\sum_{i=1}^{n}p_i+n\xi_1+L_1(z_1,z_2)\\
				\sum_{i=1}^{n}\hat{x}^i_{2}+\theta_2\sum_{i=1}^{n}p_i +n\xi_2+L_2(z_1,z_2)\\
			\end{array} \right) =\veb^0  \\
			&&	B\vex^0+\lambda\xi_1+\mu\xi_2+\lambda z_1\theta_1+\mu z_2 \theta_2=\veb^1  \\
			&&0\le p_i\le u^i(I_k,z_1,z_2)-\ell^i(I_k,z_1,z_2)  ,\quad \forall 1\le i \le n \\
			&&\xi_h\in[\tau_h,\bar{\tau}_h],\quad h=1,2\\
			&& z_1-z_2\in I_k\\
			&&\vex^0\in\Z^{t_B},\xi_1,\xi_2,z_1,z_2,p_i\in \Z, \quad \forall 1\le i\le n 
		\end{eqnarray*}
	\end{subequations}
	Here $L(z_1,z_2)$, $L_1(z_1,z_2)$, $L_2(z_1,z_2)$ are all linear functions of $z_1,z_2$ (which may contain non-zero constant term). Note again that $p_1$ is a dummy variable as $u^1(I_k,z_1,z_2)=\ell^1(I_k,z_1,z_2)=0$ enforces that $p_1=0$. $(\text{IP}_4)$ can be solved by solving $(\text{IP}_5[k])$ for every $k$ then picking the best solution. 
	
	Now we show how to solve $(\text{IP}_5[k])$. Ignoring the dummy variable $p_1$, a crucial observation is that, while $(\text{IP}_5[k])$ contains variables $p_2,p_3,\cdots,p_n$, they have exactly the same coefficients in constraints, and therefore we can ``merge" them into a single variable $p:=\sum_{i=2}^np_i$. More precisely, we consider the coefficients of $p_i$'s in the objective function, which are $v_i:=\sum_{h=1}^2w_h^i\theta_h$ for $2\le i\le n$. By re-indexing variables, we may assume without loss of generality that $v_2\ge v_3\ge\cdots\ge v_n$. Using a simple exchange argument, we can show that if $p=\sum_{i=2}^np_i\le u^2(I_k,z_1,z_2)-\ell^2(I_k,z_1,z_2)$, then the optimal solution is achieved at $p_2=p$, $p_3=p_4=\cdots=p_n=0$. More generally, if 
	$$\sum_{\gamma=2}^j \left(u^\gamma(I_k,z_1,z_2)-\ell^\gamma(I_k,z_1,z_2)\right)< \sum_{i=2}^np_i\le \sum_{\gamma=2}^{j+1} \left(u^\gamma(I_k,z_1,z_2)-\ell^\gamma(I_k,z_1,z_2)\right),$$
	then the optimal solution is achieved at $p_i=u^i(I_k,z_1,z_2)-\ell^i(I_k,z_1,z_2)$ for $2\le i\le j$ and $p_{i}=0$ for $i>j+1$. 
	
	Define $\Lambda(j):=\sum_{\gamma=2}^j \left(u^\gamma(I_k,z_1,z_2)-\ell^\gamma(I_k,z_1,z_2)\right)$ for $j\ge 2$, $\Lambda(1):=0$, and
	$W(j):=\sum_{h=1}^2\sum_{i=1}^jw_h^i\theta_h\left(u^i(I_k,z_1,z_2)-\ell^i(I_k,z_1,z_2) \right)$. 
	
	Let $(\text{IP}_5[k,j])$ be as follows:
	\begin{subequations}
		\begin{eqnarray*}
			(\text{IP}_5[k,j]):		&\max& \vew\vex=  \vew^0\vex^0+W(j-1)+L(z_1,z_2)  \nonumber\\
			&&\hspace{1cm}+\sum_{h=1}^2\sum_{i=1}^{n}w^i_h\xi_h+\sum_{h=1}^2w^j_h\theta_h \left(p-\Lambda(j-1)\right)\\
			&&	C\vex^0+ D \left(
			\begin{array}{c}
				\sum_{i=1}^{n}\hat{x}^i_{1}+\theta_1p+n\xi_1+L_1(z_1,z_2)\\
				\sum_{i=1}^{n}\hat{x}^i_{2}+\theta_2p+n\xi_2+L_2(z_1,z_2)\\
			\end{array} \right) =\veb^0  \\
			&&	B\vex^0+\lambda\xi_1+\mu\xi_2+\lambda z_1\theta_1+\mu z_2 \theta_2=\veb^1  \\
			&& \Lambda(j-1)<  p\le \Lambda(j) \\
			&&\xi_h\in[\tau_h,\bar{\tau}_h],\quad h=1,2\\
			&& z_1-z_2\in I_k\\
			&&\vex^0\in\Z^{t_B},\xi_1,\xi_2,z_1,z_2,p\in \Z, \quad \forall 1\le i\le n 
		\end{eqnarray*}
	\end{subequations}
	Our argument above shows that $(\text{IP}_5[k])$ can be solved by solving $(\text{IP}_5[k,j])$ for all $1\le j\le n$ and picking the best solution.

	It remains to solve each $(\text{IP}_5[k,j])$. Notice that this is an IP with $O(t_B)$ variables, and thus can be solved in $t_B^{O(t_B)}poly(\log\Delta)$ time by applying Kannan's algorithm. Thus, when each $\xi_h$ lies in one efficient sub-interval, $(\text{IP}_4)$ can be solved in $t_B^{O(t_B)}poly(n,\log\Delta)$ time.
	
	
	
	\smallskip
	\noindent\textbf{Step 4. Bounding the number of efficient sub-intervals of $(\xi_1,\xi_2 ) $.}
	
	Recall Eq~\eqref{eq:theta>0} and Eq~\eqref{eq:theta<0}. For simplicity, we assume $\theta_h>0$, the case of $\theta_h<0$ can be handled in a similar way.
	
	Divide $\ell^i_h-\hat{x}_h^i$ by $\theta_h>0$ and denote by $r_h\in [0,\theta_h-1]$ and $q_h$ the remainder and quotient, respectively. It is easy to see that if $0\le \xi_h < r_h$, then $d^i(\xi_h)=\lceil\frac{\ell_h^i-\hat{x}^i_{h}-\xi_h}{\theta_h}\rceil=q_h+1$. Otherwise, $r_h\le \xi_h<\theta_h$, then $d^i(\xi_h)=\lceil\frac{\ell_h^i-\hat{x}^i_{h}-\xi_h}{\theta_h}\rceil=q_h$. We define $r_h$ as one critical point which distinguishes between $d^i(\xi_h)=q_h+1$ and $d^i(\xi_h)=q_h$.
	
	Similarly, divide $u^i_h-\hat{x}_h^i$ by $\theta_h>0$ and denote by $\bar{r}_h\in [0,\theta_h-1]$ and $\bar{q}_h$ the remainder and quotient, respectively. Using the same argument as above we define $\bar{r}_h$ as one critical point which distinguishes between $\bar{d}^i(\xi_h)=\bar{q}_h$ and $\bar{d}^i(\xi_h)=\bar{q}_h-1$. Critical points can be defined in the same way if $\theta_h<0$.
	
	Overall, we can obtain at most $2n$ distinct critical points for $\xi_h$, which divides the whole interval $(-\infty,\infty)$ into at most $2n+1$ sub-intervals. It is easy to see that once $\xi_h$ lies in one of the sub-interval, all $d^i(\xi_h)$ and $\bar{d}^i(\xi_h)$ take fixed values. 
	
	Since there are at most $(2n+1)^2$ different possibilities regarding the efficient sub-intervals of $\xi_1$ and $\xi_2$, and we have concluded in step 3 that for each possibility $(\text{IP}_4)$ can be solved in $t_B^{O(t_B)} poly(n,\log\Delta)$ time, we know that overall 4-block $n$-fold can be solved in $t_B^{O(t_B)} poly(n,\log\Delta)$ time if $A\in\Z^{1\times 2}$. \qed
\end{proof}



The techniques of Theorem~\ref{n-10} can be further extended to handle the case when $A\in\Z^{s_A\times t_A}$ where $t_A=s_A+1$, $\text{rank}(A)=s_A$. The crucial observation is that, while $\vex^i$ contains more variables, the fact that $\text{rank}(A)=s_A$ and $t_A=s_A+1$ enforces that there can be only one ``free" variable, which is similar to the case when $A\in\Z^{1\times 2}$. Towards this, instead of applying B\'{e}zout's identity in Step 1, we will decompose $A$ into Smith normal form. The following Step 2, 3, 4 are similar except that now there will be $\xi_1,\xi_2,\cdots,\xi_{t_A}$, where each has $2n+1$ efficient sub-intervals. This gives rise to $n^{O(t_A)}$ different possibilities, yielding the overall running time $(t_A+t_B)^{O(t_A+t_B)}n^{O(t_A)}poly(\log\Delta)$. 

\paragraph{Proof of Theorem~\ref{thmm}.}
	Write the constraints of the n-fold IP as follows: 
\begin{subequations}
	\begin{eqnarray}
	&& C\vex^0+D\sum_{i=1}^{n}\vex^i=\veb^0\label{n-24} \\
	&&B\vex^0+A\vex^i=\veb^i,\hspace{31mm}\forall 1\le i \le n\label{n-25} \\
	&&\vel^i \le	\vex^i\le \veu^i, \hspace{38mm}\forall 0\le i \le n\nonumber
	\end{eqnarray}
\end{subequations}
\noindent\textbf{Step 1. Decompose $A$ into Smith normal form to deal with two constraints~\eqref{n-24} and~\eqref{n-25}.}

    From the previous knowledge in Preliminaries, we know that $\bar{A}$ is the Smith normal form of $A$ and $\bar{A}=UAV$, where $U$, $V$ are invertible $s_A\times s_A$ and $(t_A\times t_A)$-matrices. Then $ A=U^{-1}\bar{A}V^{-1}$. One can always calculate the Smith normal form of an integer matrix in polynomial time of $poly(t_A,\log \Delta)$~\cite{kannan1979polynomial}.  
    
   We subtract $B\vex^0+A\vex^1=\veb^1$ from both sides of Eq~\eqref{n-25}, and get the following:
    $$A(\vex^i-\vex^1)=\veb^i-\veb^1. $$
 Let $\vey^i:=V^{-1}(\vex^i-\vex^1)$ and $\widetilde{\veb}^i=U(\veb^i-\veb^1)$, and then we get
	 $\bar{A}\vey^i=\widetilde{\veb}^i.$
	 
	 Assume the diagonal elements of $\bar{A}$ are $\alpha_1,\alpha_2,\ldots, \alpha_{s_A}$. And now we know that $t_A=s_A+1$. Thus, 
	$\alpha_1y^i_1=\widetilde{b}^i_1$, $\alpha_2y^i_2=\widetilde{b}^i_2$, $\cdots$, $\alpha_{s_A}y^i_{s_A}=\widetilde{b}^i_{s_A}$. Actually $\{y^i_h|1\le h\le {s_A},2\le i\le n\}$ are determined uniquely. To be consistent, we introduce dummy variables $y^1_h=0$ for $h=1,2,\ldots,{s_A},t_A$.
	
For $V$ is an invertible $t_A\times t_A$ matrix, $V\vey^i=\vex^i-\vex^1$ and $\vex^i=\vex^1+V\vey^i$.
Thus, 
\begin{eqnarray}
	&&\sum_{i=1}^{n}\vex^i=\sum_{i=1}^{n}\vex^1+V\sum_{i=1}^{n}\vey^i.\label{n-30}
	\end{eqnarray}

Since $\{y^i_h|1\le h\le {s_A},1\le i\le n\}$ are determined uniquely, we can compute $V\sum_{i=1}^{n}\vey^i=({\theta'}_1+\theta_1\sum_{i=1}^{n}y^i_{t_A} ,\ldots, {\theta'}_{t_A}+\theta_{t_A}\sum_{i=1}^{n}y^i_{t_A})$, where ${\theta'}_h$ and $\theta_h$ for all $ h=1,2,\ldots,t_A$ are known integer constants. Plug the above into Eq~\eqref{n-24}, we have
	\begin{eqnarray}
&&	C\vex^0+ D \left(
\begin{array}{c}
{\theta'}_1+\theta_1\sum_{i=1}^{n}y^i_{t_A}+nx_1^1\\
{\theta'}_2+\theta_2\sum_{i=1}^{n}y^i_{t_A}+nx_2^1\\
\vdots\\
{\theta'}_{t_A}+\theta_{t_A}\sum_{i=1}^{n}y^i_{t_A} +nx_{t_A}^1\\
\end{array} \right) =\veb^0.\label{nn12}
\end{eqnarray}
Till now, we have transformed 4-block $n$-fold IP into an equivalent IP with variables $y^i_{t_A}$ and $x^1_h$ for $1\le i\le n$ and $h=1,2,\ldots,t_A$. 

Next, we divide $x^1_h$ by $\theta_h$ and denote by $\xi_h$ and $z_h$ its remainder and quotient, respectively, that is,
\begin{eqnarray}
	&&x_h^1=\xi_h+\theta_h z_h, \quad h=1,2,\ldots,t_A\label{nn44}
	\end{eqnarray}
where $\xi_h\in [0,|\theta_h|-1]$. 

Now we can rewrite the 4-block $n$-fold IP using new variables $\xi_h,z_h$ (where $h=1,2,\ldots,t_A$) and $y^i_{t_A}$ (where $1\le i\le n$). 
\begin{subequations}
	\begin{eqnarray}
(\text{IP}_6):	&\max& \vew\vex=\vew^0\vex^0+c_0+\sum_{i=1}^{n}\sum_{h=1}^{t_A}w^i_h\theta_h(y^i_{t_A}+z_h)+\sum_{i=1}^{n}\sum_{h=1}^{t_A}w^i_h\xi_h\nonumber\\
	&&	C\vex^0+ D \left(
\begin{array}{c}
{\theta'}_1+\theta_1\sum_{i=1}^{n}y^i_{t_A}+n(\xi_1+z_1\theta_1)\\
{\theta'}_2+\theta_2\sum_{i=1}^{n}y^i_{t_A}++n(\xi_2+z_2\theta_2)\\
\vdots\\
{\theta'}_{t_A}+\theta_{t_A}\sum_{i=1}^{n}y^i_{t_A}+n(\xi_{t_A}+z_{t_A}\theta_{t_A})\\
\end{array} \right) =\veb^0\label{nn14}\\
&&B\vex^0+A\left(
\begin{array}{c}
\xi_1+z_1\theta_1 \\
\xi_2+z_2 \theta_2\\
\vdots\\
\xi_{t_A}+z_{t_A}\theta_{t_A}\\
\end{array} \right) =\veb^1\label{n-28}\\
&&	y^1_h=0, \hspace{55mm}\forall 1\le h \le t_A  \\
	&&\vel^i \le	\vex^i\le \veu^i, \hspace{47mm}\forall 0\le i \le n \label{IP6:box}
	\end{eqnarray}
\end{subequations}
where $c_0:=\sum_{i=1}^{n}\sum_{h=1}^{t_A-1}\widetilde{w}^i_hy_h^i$ is a fixed value.

It remains to replace the box constraints $\vel^i\le \vex^i\le \veu^i$ with respect to the new variables.

\smallskip
\noindent\textbf{Step 2. Deal with the box constraints $\vel^i\le \vex^i\le \veu^i$.}

	Plug Eq~\eqref{nn44} and the equality $\vex^i=\vex^1+V\vey^i$, $\forall 1\le i\le n$ into the box constraint,
	we have that
	\begin{eqnarray}
&& \ell^i_h-\widetilde{\theta}^i_h-\xi_h\le{\theta_h}( y^i_{t_A}+z_h)\le u^i_h-\widetilde{\theta}^i_h-\xi_h, \forall 1\le i\le n, h=1,2,\ldots,t_A \label{eq:bb}
\end{eqnarray}
where all $\widetilde{\theta}^i_h$, $1\le h\le t_A$ and $1\le i\le n$, are constants during the computation of $V\vey^i$.

To divide the fixed value $\theta_h$ on both sides we need to distinguish between whether it is positive or negative. Therefore we take the same way with~\eqref{eq:theta>0} and~\eqref{eq:theta<0} in Theorem~\ref{n-10}.

For simplicity, we define 
\begin{subequations}
\begin{eqnarray}
&&\text{If $\theta_h>0$, then }d^i(\xi_h)=\lceil\frac{\ell_h^i-\widetilde{\theta}^i_h-\xi_h}{\theta_h}\rceil, \quad \bar{d}^i(\xi_h)=\lfloor\frac{u_h^i-\widetilde{\theta}^i_h-\xi_h}{\theta_h}\rfloor, \label{eq:theta>}\\
&&\text{If $\theta_h<0$, then } d^i(\xi_h)=\lceil\frac{u_h^i-\widetilde{\theta}^i_h-\xi_h}{\theta_h}\rceil, \quad \bar{d}^i(\xi_h)=\lfloor\frac{\ell_h^i-\widetilde{\theta}^i_h-\xi_h}{\theta_h}\rfloor. \label{eq:theta<}
\end{eqnarray}
\end{subequations}

Then Eq~\eqref{eq:bb} can be simplified as 
\begin{eqnarray}
d^i(\xi_h)\le y^i_{t_A}+z_h\le \bar{d}^i(\xi_h), \quad \forall 1\le i\le n, h=1,2,\ldots,t_A. \label{eq:newboxes}
\end{eqnarray}
Here we use the ceiling function to round up the left side and use the floor function to round down the right side since $ y^i_{t_A}+z_h$ is an integer.

We emphasize that here $d^i(\xi_h)$ and $\bar{d}^i(\xi_h)$ are dependent on the variable $\xi_h$, however, since $\xi_h\in [0,|\theta_h|-1]$, either $d^i(\xi_h)$ or $\bar{d}^i(\xi_h)$ may take at most ${t_A}$ different values. Hence, a straightforward counting yields ${t_A}^{2n}$ possibilities regarding the values for all $d^i(\xi_h)$ and $\bar{d}^i(\xi_h)$. However, notice that $d^i(\xi_h)$'s and $\bar{d}^i(\xi_h)$'s are not independent but change simultaneously as $\xi_h$ changes, we will show that we can divide the range $\xi_h\in [0,|\theta_h|-1]$ into a polynomial number of sub-intervals such that if $\xi_h$ lies in one sub-interval, then all $d^i(\xi_h)$'s and $\bar{d}^i(\xi_h)$'s take some fixed value. We call it an efficient sub-interval.


In the following step 3 we will show that $(\text{IP}_6)$ can be solved in $(t_B+t_A)^{O(t_B+t_A)}poly(\log\Delta)$ time once each $\xi_h$ lies in one of the efficient sub-intervals (and hence all $d^i(\xi_h)$'s and $\bar{d}_i(\xi_h)$'s are fixed), and then in step 4 we prove there are $n^{O(t_A)}$ different efficient sub-intervals. 

\smallskip
\noindent\textbf{Step 3. Solve $(\text{IP}_6)$ in FPT time when each $\xi_h$ lies in one efficient sub-interval.}	

Let $[\tau_h,\bar{\tau}_h]$ be an arbitrary efficient sub-interval of $\xi_h$ such that all $d^i(\xi_h)$'s and $\bar{d}_i(\xi_h)$'s take fixed value for any $\xi_h\in [\tau_h,\bar{\tau}_h]$. From now on we write them as $d^i_h$ and $\bar{d}^i_h$. By Eq~\eqref{eq:newboxes}, $\forall 1\le i\le n$, we have 
	\begin{eqnarray}
	&&\max\{d^i_1-z_1,d^i_2-z_2,\ldots,d^i_{t_A}-z_{t_A}\}\nonumber\\
	&\le& y^i_{t_A}\nonumber\\
	&\le &\min\{\bar{d}^i_1-z_1,\bar{d}^i_2-z_2,\ldots,\bar{d}^i_{t_A}-z_{t_A}\}.\label{eq:box11}
	\end{eqnarray}

When we compare $d^i_{h_1}-z_{h_1}$ and $d^i_{h_2}-z_{h_2}$ for all $1\le i\le n$ and $\forall h_1,h_2\in \{1,2,\ldots, t_A\}$, we just need to compare the value of $z_{h_1}-z_{h_2}$ with at most $2n$ distinct values, which are $d^i_{h_1}-d^i_{h_2}$ and $\bar{d}^i_{h_1}-\bar{d}^i_{h_2}$. Hence, to get rid of the $\max$ and $\min$ on both sides of Eq~\eqref{eq:box11}, we only need to repeat the above process $\frac{t_A(t_A-1)}{2}$ times, creating at most $nt_A(t_A-1)$ critical values, and dividing  $(-\infty,\infty)$ into at most $nt_A(t_A-1)+1$ intervals based on the values of $d^i_{h_1}-d^i_{h_2}$ and $\bar{d}^i_{h_1}-\bar{d}^i_{h_2}$ for all $h_1,h_2\in \{1,2,\ldots, t_A\}$. Let these intervals be $I_1,I_2,\cdots,I_{nt_A(t_A-1)+1}$. When $\{z_{h_1}-z_{h_2}|\forall h_1,h_2\in\{1,2,\ldots,{t_A}\}\}$ belong to one of the intervals, say, $I_k$, Eq~\eqref{eq:box11} can be simplified as
\begin{eqnarray}
\ell^i(I_k,z_1,z_2,\ldots,z_{t_A})\le y^i_{t_A}\le u^i(I_k,z_1,z_2,\ldots,z_{t_A}), \quad \forall 1\le i\le n \label{eq:box12}
\end{eqnarray}
where $\ell^i(I_k,z_1,z_2,\ldots,z_{t_A})$ and $u^i(I_k,z_1,z_2,\ldots,z_{t_A})$ are linear functions in $z_1,z_2$,\\
$\ldots,z_{t_A}$. Recall that $y^1_{t_A}=0$, whereas $\ell^1(I_k,z_1,z_2,\ldots,z_{t_A})=u^1(I_k,z_1,z_2,\ldots,z_{t_A})$\\$=0$. For simplicity, we define a new variable $p_i:=y^i_{t_A}-\ell^i(I_k,z_1,z_2,\ldots,z_{t_A})$, then it is easy to see that
\begin{eqnarray}
0\le p_i\le u^i(I_k,z_1,z_2,\ldots,z_{t_A})-\ell^i(I_k,z_1,z_2,\ldots,z_{t_A}), \quad \forall 1\le i\le n \label{eq:box13}
\end{eqnarray}

Now we rewrite $(\text{IP}_6)$ using new variables $p_i$ and $z_1,z_2,\ldots,z_{t_A}$ as follows:
\begin{subequations}
	\begin{eqnarray*}
(\text{IP}_7[k]):		&\max& \vew\vex=  \vew^0\vex^0+\sum_{h=1}^{t_A}\sum_{i=1}^{n}w^i_h\xi_h+\sum_{h=1}^{t_A}\sum_{i=1}^{n}w^i_h\theta_h p_i+L(z_1,z_2,\ldots,z_{t_A})  \nonumber\\
		&&	C\vex^0+ D \left(
\begin{array}{c}
{\theta'}_1+\theta_1\sum_{i=1}^{n}p_i +n\xi_1+L_1(z_1,z_2,\ldots,z_{t_A})\\
{\theta'}_2+\theta_2\sum_{i=1}^{n}p_i +n\xi_2+L_2(z_1,z_2,\ldots,z_{t_A})\\
\vdots\\
{\theta'}_{t_A}+\theta_{t_A}\sum_{i=1}^{n}p_i+n\xi_{t_A}+L_{t_A}(z_1,z_2,\ldots,z_{t_A})\\
\end{array} \right) =\veb^0\\
&&B\vex^0+A\left(
\begin{array}{c}
\xi_1+z_1\theta_1 \\
\xi_2+z_2 \theta_2\\
\vdots\\
\xi_{t_A}+z_{t_A}\theta_{t_A}\\
\end{array} \right) =\veb^1\\
	&&0\le p_i\le u^i(I_k,z_1,z_2,\ldots,z_{t_A})-\ell^i(I_k,z_1,z_2,\ldots,z_{t_A})  ,\quad \forall 1\le i \le n \\
    &&\xi_h\in[\tau_h,\bar{\tau}_h],\quad h=1,2,\ldots,{t_A}\\
	&& z_{h_1}-z_{h_2}\in I_k, \forall h_1,h_2\in\{1,2,\ldots,{t_A}\}\\
	&&\vex^0\in\Z^{t_B},\xi_h,z_h,p\in \Z, \quad \forall 1\le i\le n,1\le h\le t_A 
	\end{eqnarray*}
	\end{subequations}
	Here $L(z_1,z_2,\ldots,z_{t_A})$, $L_h(z_1,z_2,\ldots,z_{t_A})$, $\forall 1\le h\le t_A$ are all linear functions of $z_1,z_2,\ldots,z_{t_A}$ which may contain constant term. 
	
	Note again that $p_1$ is a dummy variable, $u^1(I_k,z_1,z_2,\ldots,z_{t_A})=\ell^1(I_k,z_1,z_2,$\\$\ldots,z_{t_A})=0$ enforces that $p_1=0$. $(\text{IP}_6)$ can be solved by solving $(\text{IP}_7[k])$ for every $k$ then picking the best solution. 

Now we show how to solve $(\text{IP}_7[k])$. Ignoring the dummy variable $p_1$, a crucial observation is that, while $(\text{IP}_7[k])$ contains variables $p_2,p_3,\cdots,p_n$, they have exactly the same coefficients in constraints, and therefore we can ``merge" them into a single variable $p:=\sum_{i=2}^np_i$. More precisely, we consider the coefficients of $p_i$'s in the objective function, which are $v_i:=\sum_{h=1}^{t_A}w_h^i\theta_h$ for $2\le i\le n$. By re-indexing variables, we may assume without loss of generality that $v_2\ge v_3\ge\cdots\ge v_n$. Using a simple exchange argument, we can show that if $p=\sum_{i=2}^np_i\le u^2(I_k,z_1,z_2,\ldots,z_{t_A})-\ell^2(I_k,z_1,z_2,\ldots,z_{t_A})$, then the optimal solution is achieved at $p_2=p$, $p_3=p_4=\cdots=p_n=0$. More generally, if 
\begin{subequations}
	\begin{eqnarray*}
	&&\sum_{\gamma=2}^j \left(u^\gamma(I_k,z_1,z_2,\ldots,z_{t_A})-\ell^\gamma(I_k,z_1,z_2,\ldots,z_{t_A})\right)\\
	&<& \sum_{i=2}^np_i\\
	&\le& \sum_{\gamma=2}^{j+1} \left(u^\gamma(I_k,z_1,z_2,\ldots,z_{t_A})-\ell^\gamma(I_k,z_1,z_2,\ldots,z_{t_A})\right),
	\end{eqnarray*}
\end{subequations}
then the optimal solution is achieved at $p_i=u^i(I_k,z_1,z_2,\ldots,z_{t_A})-\ell^i(I_k,z_1,z_2,$\\$\ldots,z_{t_A})$ for $2\le i\le j$ and $p_{i}=0$ for $i>j+1$. 

Define $\Lambda(j):=\sum_{\gamma=2}^j \left(u^\gamma(I_k,z_1,z_2,\ldots,z_{t_A})-\ell^\gamma(I_k,z_1,z_2,\ldots,z_{t_A})\right)$, $\Lambda(1):=0$, $W(j):=\sum_{h=1}^{t_A}\sum_{i=1}^jw_h^i\theta_h\left(u^i(I_k,z_1,z_2,\ldots,z_{t_A})-\ell^i(I_k,z_1,z_2,\ldots,z_{t_A}) \right)$. Let $(\text{IP}_7[k,j])$ be as follows:
	\begin{subequations}
	\begin{eqnarray*}
(\text{IP}_7[k,j]):		&\max& \vew\vex=  \vew^0\vex^0+W(j-1)+L(z_1,z_2,\ldots,z_{t_A})  \nonumber\\
&&\hspace{1cm}+\sum_{h=1}^{t_A}\sum_{i=1}^{n}w^i_h\xi_h+\sum_{h=1}^{t_A}w^j_h\theta_h \left(p-\Lambda(j-1)\right)\\
	&&	C\vex^0+ D \left(
\begin{array}{c}
{\theta'}_1+\theta_1p +n\xi_1+L_1(z_1,z_2,\ldots,z_{t_A})\\
{\theta'}_2+\theta_2p +n\xi_2+L_2(z_1,z_2,\ldots,z_{t_A})\\
\vdots\\
{\theta'}_{t_A}+\theta_{t_A}p+n\xi_{t_A}+L_{t_A}(z_1,z_2,\ldots,z_{t_A})\\
\end{array} \right) =\veb^0\\
    &&B\vex^0+A\left(
\begin{array}{c}
\xi_1+z_1\theta_1 \\
\xi_2+z_2 \theta_2\\
\vdots\\
\xi_{t_A}+z_{t_A}\theta_{t_A}\\
    \end{array} \right) =\veb^1\\
	&& \Lambda(j-1)<  p\le \Lambda(j) \\
    &&\xi_h\in[\tau_h,\bar{\tau}_h],\quad h=1,2,\ldots,{t_A}\\
	&& z_{h_1}-z_{h_2}\in I_k,\quad \forall h_1,h_2\in\{1,2,\ldots,{t_A}\}\\
	&&\vex^0\in\Z^{t_B},\xi_h,z_h,p_i\in \Z, \quad \forall 1\le i\le n,1\le h\le t_A 
	\end{eqnarray*}
\end{subequations}
Our argument above shows that $(\text{IP}_7[k])$ can be solved by solving $(\text{IP}_7[k,j])$ for all $1\le j\le n$ and picking the best solution.

It remains to solve each $(\text{IP}_7[k,j])$. Notice that this is an IP with $O(t_A+t_B)$ variables, and thus can be solved in $(t_A+t_B)^{O(t_A+t_B)}poly(n,\log\Delta)$ time by applying Kannan's algorithm. Thus, when each $\xi_h$ lies in one efficient sub-interval, $(\text{IP}_6)$ can be solved in $(t_A+t_B)^{O(t_A+t_B)}poly(n,\log\Delta)$ time.

\smallskip
\noindent\textbf{Step 4. Bounding the number of efficient sub-intervals of $(\xi_1,\xi_2,\cdots,\xi_{t_A})$.}

Recall Eq~\eqref{eq:theta>} and Eq~\eqref{eq:theta<}. For simplicity, we assume $\theta_h>0$, the case of $\theta_h<0$ can be handled in a similar way.

Divide $\ell^i_h-\widetilde{\theta}^i_h$ by $\theta_h>0$ and denote by $r_h\in [0,\theta_h-1]$ and $q_h$ the remainder and quotient, respectively. It is easy to see that if $0\le \xi_h < r_h$, then $d^i(\xi_h)=\lceil\frac{\ell_h^i-\widetilde{\theta}^i_h-\xi_h}{\theta_h}\rceil=q_h+1$. Otherwise, $r_h\le \xi_h<\theta_h$, then $d^i(\xi_h)=\lceil\frac{\ell_h^i-\widetilde{\theta}^i_h-\xi_h}{\theta_h}\rceil=q_h$. We define $r_h$ as one critical point which distinguishes between $d^i(\xi_h)=q_h+1$ and $d^i(\xi_h)=q_h$.

Similarly, divide $u^i_h-\widetilde{\theta}^i_h$ by $\theta_h>0$ and denote by $\bar{r}_h\in [0,\theta_h-1]$ and $\bar{q}_h$ the remainder and quotient, respectively. Using the same argument as above we define $\bar{r}_h$ as one critical point which distinguishes between $\bar{d}^i(\xi_h)=\bar{q}_h$ and $\bar{d}^i(\xi_h)=\bar{q}_h-1$. Critical points can be defined in the same way if $\theta_h<0$.

Overall, we can obtain at most $2n$ distinct critical points for each $\xi_h$, and $2nt_A$ distinct critical points for all $\xi_h$, $\forall 1\le h\le t_A$, which divides the whole interval $(-\infty,\infty)$ into at most $2nt_A+1$ sub-intervals. It is easy to see that once $\xi_h$ lies in one of the sub-interval, all $d^i(\xi_h)$ and $\bar{d}^i(\xi_h)$ take fixed values. Thus, the number of efficient sub-intervals of $(\xi_1,\xi_2,\cdots,\xi_{t_A})$ is $(nt_A)^{O(t_A)}$.
\qed
 
 \  

We remark that the exponential term $n^{O(t_A)}$ comes from the enumeration of all efficient sub-intervals for $\xi_h$'s, where $\xi_h$ is a ``global" variable that appears in constraint~\eqref{eq:bb} for every $1\le i\le n$. If we consider $n$-fold IP and there is no $\vex^0$, then we can get rid of $\xi_h$ and $z_h$ in constraint~\eqref{eq:bb} and derive upper and lower bounds for each $y_i$ directly, yielding the following theorem. 



\begin{theorem}
	If $A\in\Z^{s_A\times t_A} $, $t_A=s_A+1$ and $\text{rank}(A)=s_A$, $n$-fold IP can be solved in linear time of $n\cdot poly(t_A,\log \Delta)$. \label{n-6}
\end{theorem}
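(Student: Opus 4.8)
The plan is to specialize the Smith-normal-form reduction of Theorem~\ref{thmm} to the case $B=C=0$, where, as anticipated in the remark preceding the theorem, the absence of the $\vex^0$ brick lets us parameterize each brick \emph{independently} instead of relative to $\vex^1$, thereby deleting the global variables $\xi_h,z_h$ and the $n^{O(t_A)}$ enumeration. First I would write $n$-fold IP as $\max\{\vew\vex:\ A\vex^i=\veb^i\ (1\le i\le n),\ D\sum_{i=1}^n\vex^i=\veb^0,\ \vel^i\le\vex^i\le\veu^i\}$, compute $\bar A=UAV$, and set $\vey^i:=V^{-1}\vex^i$, so that $A\vex^i=\veb^i$ becomes $\bar A\vey^i=U\veb^i=:\widetilde\veb^i$. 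Because $t_A=s_A+1$ and $\mathrm{rank}(A)=s_A$, the diagonal form forces $\alpha_h y^i_h=\widetilde b^i_h$ for $h\le s_A$, which determines $y^i_1,\dots,y^i_{s_A}$ uniquely (and yields an immediate infeasibility test whenever some $\widetilde b^i_h/\alpha_h\notin\Z$), while the last coordinate $y_i:=y^i_{t_A}$ remains free. Hence each brick is an affine function of a single integer, $\vex^i=\boldsymbol c^i+y_i\boldsymbol\theta$, where $\boldsymbol\theta=(\theta_1,\dots,\theta_{t_A})$ is the last column of $V$ (so its entries coincide with the constants $\theta_h$ of the proof of Theorem~\ref{thmm}, and is the same for every $i$) and $\boldsymbol c^i$ is a constant vector determined by $\widetilde\veb^i$.

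Next I would handle the box constraints. Substituting $\vex^i=\boldsymbol c^i+y_i\boldsymbol\theta$ into $\vel^i\le\vex^i\le\veu^i$ gives, coordinate by coordinate, $\ell^i_h-c^i_h\le y_i\theta_h\le u^i_h-c^i_h$; dividing by $\theta_h$ with the sign-dependent rounding of Eqs.~\eqref{eq:theta>} and \eqref{eq:theta<} and intersecting over $h=1,\dots,t_A$ produces an explicit integer interval $\underline L_i\le y_i\le\overline L_i$ for each $i$. Crucially, since there is no shared first brick, these intervals contain no global variable, so no efficient sub-intervals need be enumerated. I would then collapse the linking constraint to a single scalar condition: with $\boldsymbol\gamma:=D\sum_i\boldsymbol c^i$ and $\veg:=D\boldsymbol\theta$ fixed, $D\sum_i\vex^i=\boldsymbol\gamma+\veg\,S$ where $S:=\sum_i y_i$, so $D\sum_i\vex^i=\veb^0$ reads $\veg\,S=\veb^0-\boldsymbol\gamma$. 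If $\veg\ne\mathbf 0$ this fixes $S=S^*$ to one integer value (also a consistency check across the $s_D$ coordinates); if $\veg=\mathbf 0$ the value of $S$ is unconstrained after the single feasibility test $\boldsymbol\gamma=\veb^0$. The objective likewise becomes linear, $\vew\vex=\mathrm{const}+\sum_i\omega_i y_i$ with $\omega_i:=\vew^i\cdot\boldsymbol\theta$.

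The whole problem thus reduces to the single-budget resource-allocation program $\max\{\sum_i\omega_i y_i:\ \underline L_i\le y_i\le\overline L_i,\ \sum_i y_i=S^*\}$ (dropping the equality when $\veg=\mathbf 0$). I would solve it greedily: initialize $y_i=\underline L_i$, then distribute the residual budget $S^*-\sum_i\underline L_i$ among the variables in non-increasing order of $\omega_i$, raising each up to $\overline L_i$; integrality is automatic because all endpoints are integers, and feasibility holds iff $\sum_i\underline L_i\le S^*\le\sum_i\overline L_i$. I expect the main obstacle to be achieving \emph{genuinely} linear time rather than $O(n\log n)$: naive sorting of the $\omega_i$ is too slow, so instead of sorting I would locate the saturating threshold by a prune-and-search / weighted-median computation, which solves the allocation in $O(n)$ arithmetic operations. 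Combined with the $poly(t_A,\log\Delta)$ cost of the Smith-normal-form computation (by the Remark in Preliminaries) and the $O(n)\cdot poly(t_A,\log\Delta)$ cost of forming the intervals, the offsets $\boldsymbol c^i$, the coefficients $\omega_i$, and the sums $\boldsymbol\gamma,\sum_i\underline L_i,\sum_i\overline L_i$, this yields the claimed running time $n\cdot poly(t_A,\log\Delta)$.
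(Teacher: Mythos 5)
Your proposal follows essentially the same route as the paper's own proof: apply the Smith normal form $\bar A=UAV$ brick-wise so that $\vey^i=V^{-1}\vex^i$ has all coordinates pinned down except $y^i_{t_A}$, reduce the box constraints to integer intervals on $y^i_{t_A}$ and the linking constraint $D\sum_i\vex^i=\veb^0$ to a single budget $\sum_i y^i_{t_A}=d_0$, and solve the resulting resource-allocation IP greedily. Your extra touches (the divisibility/infeasibility checks, the degenerate case $D\boldsymbol{\theta}=\mathbf{0}$, and replacing the paper's sort of the coefficients $\widetilde{w}^i_{t_A}$ by a weighted-median/prune-and-search step to avoid the $O(n\log n)$ overhead) are minor refinements of the same argument rather than a different approach.
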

\begin{proof}
We write the constraints of $n$-fold IP explicitly as follows: 
 	\begin{subequations}
 	\begin{eqnarray}
 &&D\sum_{i=1}^{n}\vex^i=\veb^0\label{con1}\\
 &&	A\vex^i=\veb^i,  \hspace{34mm}\ \forall 1\le i \le n\label{con2}\\
 &&	\vel^i \le	\vex^i\le \veu^i, \hspace{30mm}\ \forall 1\le i \le n\nonumber
 \end{eqnarray}
 \end{subequations}
 
Let $\bar{A}$ be the Smith normal form of $A$, then there exist integral matrices $U$, $V$, whose inverse are also integral matrices, such that $ A=U^{-1}\bar{A}V^{-1}$.
 Furthermore, $U,V$ can be calculated in time $poly(t_A,\log \Delta)$~\cite{kannan1979polynomial}.  

Combining with Constraint~\eqref{con2}, we have $\bar{A}V^{-1}\vex^i=\widetilde{\veb}^i$, where $\widetilde{\veb}^i=U\veb^i$. Let $\vey^i:=V^{-1}\vex^i$, and in the following we will substitute $\vex$ with new variables $\vey$. Thus we get $\bar{A}\vey^i=\widetilde{\veb}^i$, which implies that $\alpha_jy^i_j=\widetilde{b}^i_j$ for $1\le j\le s_A=t_A-1$. This settles the value of all $y^i_j$'s except $y^i_{t_A}$. 

Next we consider Constraint~\eqref{con1}. It can be written as $D\sum_{i=1}^{n}V\vey^i=\veb^0$. For simplicity let $\widetilde{D}=DV$, then we have $\widetilde{D}\sum_{i=1}^{n}\vey^i=\veb^0$. Note that only $y^i_{t_A}$'s are variables, $\widetilde{D}\sum_{i=1}^{n}\vey^i=\veb^0$ reduces to equalities with only one variable $\sum_{i=1}^ny^i_{t_A}$, which can be solved directly and we get 
$$\sum_{i=1}^ny^i_{t_A}=d_0,$$
for some $d_0$.


Finally we consider the box constraints. From $\vel^i \le	\vex^i\le \veu^i$, we get $\vel^i \le V	\vey^i\le \veu^i$. Recall that the value of all $y^i_j$'s, except $y^i_{t_A}$, has been determined. Hence, $\vel^i \le V	\vey^i\le \veu^i$ reduces to a set of inequalities in $y^i_{t_A}$. Note that each inequality has the form of $\alpha y^i_{t_A}\le \beta$ for some $\alpha$ and $\beta$. Since $y^i_{t_A}$ is an integer, it can be further simplified as $ y^i_{t_A}\le \lfloor \beta/\alpha\rfloor$ if $\alpha>0$, or $ y^i_{t_A}\ge \lceil \beta/\alpha\rceil$ if $\alpha<0$. Hence, $\vel^i \le V	\vey^i\le \veu^i$ can be simplified into the following form:
\begin{eqnarray}
&&	\tilde{\ell}^i\le y^i_{t_A} \le \tilde{u}^i.\label{n-14}
\end{eqnarray}

For ease of discussion, we further substitute $y^i_{t_A}$'s with a new variable $p_i:=y^i_{t_A}-\tilde{\ell}^i$.
Simple calculations show that $\vew\vex=\sum_{i=1}^{n}\vew^i\vex^i=\sum_{i=1}^{n}\vew^iV\vey^i=c_0+\sum_{i=1}^{n}\widetilde{w}^i_{t_A}p_i$ for some $\widetilde{w}^i_{t_A}$ and fixed value $c_0$. Therefore, we can rewrite the $n$-fold IP as:
\begin{eqnarray}
(\text{IP}_8): & \max& c_0+\sum_{i=1}^{n} \widetilde{w}^i_{t_A}p_i\nonumber\\
&&	\sum_{i=1}^{n} p_i=d_0-\sum_{i=1}^n\tilde{\ell}^i \nonumber\\
&&	0 \le p_i \le \tilde{u}^i-\tilde{\ell}^i,\hspace{28mm}\ \forall 1\le i \le n\nonumber
\end{eqnarray}

$(\text{IP}_8)$ can be solved via a simply greedy algorithm. By re-indexing variables, we may assume without loss of generality that $\widetilde{w}^1_{t_A}\ge \widetilde{w}^2_{t_A}\ge \cdots\ge \widetilde{w}^n_{t_A}$. Suppose $\sum_{i=1}^\gamma (\tilde{u}^i-\tilde{\ell}^i)<d_0-\sum_{i=1}^n\tilde{\ell}^i\le \sum_{i=1}^{\gamma+1}(\tilde{u}^i-\tilde{\ell}^i)$, then a simple exchange argument shows that the optimal objective is achieved at $p_j=\tilde{u}^j-\tilde{\ell}^j$ for $1\le j\le \gamma$, $p_{\gamma+1}=d_0-\sum_{i=1}^n\tilde{\ell}^i-\sum_{i=1}^\gamma (\tilde{u}^i-\tilde{\ell}^i)$, and $p_j=0$ for $j\ge \gamma+2$.


 
Overall, the running time is $n\cdot poly(t_A,\log\Delta)$ where $poly(t_A,\log\Delta)$ is the time to compute Smith normal form of $A$.
\end{proof}


\section{Conclusion}
In this paper, we explore the possibility of developing an algorithm that runs polynomially in $\log\Delta$ for block-structured IP. We obtain positive as well as negative results. Our results seem to suggest that the box constraint $\vel\le\vex\le\veu$ significantly impact the tractability. It remains as an important open problem to give a complete characterization on what kind of box constraints may lead to algorithms polynomial in $\log\Delta$. Another interesting open problem is on 4-block $n$-fold IP, when $A\in\Z^{s_A\times t_A} $, $t_A=s_A+1$ and $\text{rank}(A)=s_A$. Currently our algorithm runs in $(t_A+t_B)^{O(t_A+t_B)}\cdot n^{O(t_A)}\cdot poly(\log\Delta)$ time, which is an XP algorithm when taking $t_A,t_B$ as a parameter. It remains open whether there exists an FPT algorithm parameterized by $t_A,t_B$.

 \bibliographystyle{splncs04}
 \bibliography{ipco_block}

\end{document}